\newtheorem{thm}{Theorem}[section]
\newtheorem{proposition}{Proposition}[section]
\newtheorem{lemma}{Lemma}[section]
\newtheorem{corollary}{Corollary}[section]
\theoremstyle{definition}
\newtheorem{definition}{Definition}[section]
\newtheorem{remark}{Remark}
\newtheorem{hypothesis}{Hypothesis}
\def\R{{\mathbb{R}}}
\def\N{{\mathbb{N}}}
\newcommand{\cA}{{\mathcal A}}
\newcommand{\cY}{{\mathcal Y}}
\def\ds{\displaystyle}
\def\ve{\varepsilon}
\def\1e{\boldsymbol{1_\varepsilon}}
\title{Stabilization for degenerate equations with drift and small singular term}
\author{
{\sc Genni Fragnelli$^a$, Dimitri Mugnai$^a$, Amine Sbai$^{b,c}$}\\\\
$^a$Dipartimento di Scienze Ecologiche e Biologiche,\\
Università della Tuscia,\\
Largo dell’Università, 01100 Viterbo, Italy,\\
email: (dimitri.mugnai, genni.fragnelli)@unitus.it\\\\
$^b$Hassan first University of Settat,\\
Faculty of Sciences and Technology, MISI Laboratory,\\
B.P 577, Settat 26000, Morocco.\\\\
$^c$Department of Applied Mathematics,\\
University of Granada, Granada, Spain.\\
email: a.sbai@uhp.ac.ma\\
}
\date{}
\begin{document}

\maketitle

\begin{abstract}
We consider a degenerate/singular wave equation in one dimension, with drift and in presence of a leading operator which is not in divergence form. We impose a homogeneous Dirichlet boundary condition where the degeneracy occurs and a boundary damping at the other endpoint. We provide some conditions for the uniform exponential decay of solutions for the associated Cauchy problem.\end{abstract}

Keywords: degenerate wave equation, singular potentials, drift, stabilization, exponential decay.

MSC 2020: 35L10, 35L80, 35L81, 93D15.
\section{Introduction}\label{intro}

This paper is devoted to study the stabilization of degenerate equations of waves type with drift and with a small singular perturbation through a linear boundary feedback. To be more precise, we consider the following problem:
\begin{align}\label{mainequation}
\begin{cases}
y_{t t}-a(x) y_{x x}-b(x) y_x-\ds \frac{\lambda}{d(x)}y=0, & (t, x) \in Q, \\ y_t(t, 1)+\eta y_x(t, 1)+\beta y(t, 1)=0, & t \in(0, +\infty), \\ y(t, 0)=0, & t>0, \\ y(0, x)=y_0(x), \quad y_t(0, x)=y_1(x), & x \in(0,1),\end{cases}
\end{align}
where $Q=(0, +\infty) \times(0,1), a, b \in C^0[0,1]$, with $a>0$ on $(0,1], a(0)=0$ and $\ds\frac{b}{a} \in L^1(0,1)$\footnote{If $a(x)=x^K, K>0$, we can consider $b(x)=x^h$ for any $h>K-1$.}. In the boundary term we take $\beta \geq 0$ and $\eta$ is the well-known absolutely continuous weight function defined by 
\begin{equation}\label{eta}
\eta(x):=\exp\left \{\int_{\frac{1}{2}}^x\frac{b(s)}{a(s)}ds\right \}
, \quad x\in [0,1],
\end{equation}
introduced by Feller  several years ago (see \cite{F}). As usual (see, e.g.,  \cite{fmbook}), we find that $\eta\in C^0[0,1]\cap
C^1(0,1]$ is a strictly positive function and it can be extended to a function of class $C^1[0,1]$ when $b$ degenerates at 0 not slower than $a$ (for instance if $a(x)=x^K$ and $b(x)=x^h$,  $h\geq K$).

The fact that $a(0)=0$ introduces a lack of ellipticity in the spatial operator and for this reason the equation is {\it degenerate}. Degenerate equations have attracted great attention in the last decades, since they appear in several applied contexts, such as Physics (\cite{36}), Biology (\cite{9}, \cite{21}), Mathematical Finance (\cite{35}), Climatology (\cite{18}) and image processing  (\cite{17}). In particular, degenerate parabolic equations have been object of an intensive research in the last two decades (for instance, see \cite{acf,benoit,bfm,cfrjee,cfr,cavalcanti,fm13,fmmem,gao,wang}), mainly from the point of view of null controllability, in general obtained by new Carleman estimates. Problems in presence of both a degeneracy and a singularity have been considered, as well (see \cite{AFS,AHSS,att,BHSV,fjde,fm17,fmEJ})

A much less studied situation is the one of waves-type equations admitting a degeneracy term (see \cite{AFI,alabau,adv,bcg,BFM2022,lx,ZC}) or a singular term (see \cite{ca,VZ}), for which the authors prove controllability or (exponential) stability results.

As far as we know, for waves-type equations admitting simultaneously degeneracy and singularity, only controllability problems have been faced (see the recent papers \cite{ams,fms,MoS}), while nothing has been done for stability. For this reason, in this paper we focus on such a problem, proving that \eqref{mainequation} permits boundary stabilization, provided that the singular term has a small coefficient (see Theorem \ref{Energiastima} below). Hence, we may regard this result as a perturbation 
of the related one in \cite{fms}. However, the presence of the singular term $\frac{1}{d}u$ introduces several difficulties, which let us treat only the case of a function $d$ with {\it weak degeneracy}, according to the definition below.

\begin{definition}
A function $g$ is weakly degenerate at 0, (WD) for short, if $g \in$ $C^0[0,1] \cap C^1(0,1]$ is such that $g(0)=0, g>0$ on $(0,1]$ and, if
\begin{align}\label{WD}
\sup _{x \in(0,1]} \frac{x\left| g^{\prime}(x)\right|}{g(x)}=K_g,
\end{align}
then $K_g \in(0,1)$.
\end{definition}

\begin{definition}
 A function $g$ is strongly degenerate at 0, (SD) for short, if $g \in$ $C^1[0,1]$ is such that $g(0)=0, g>0$ on $(0,1]$ and in \eqref{WD} we have $K_g \in[1,2)$.
\end{definition}

Referring to Theorem \ref{Energiastima} for the precise assumptions and statement, a flavour of our exponential stability result is the following:
\begin{center}
{\sl If $\lambda$ is small and $a,b$ are not too degenerate, then\\ the energy of the solution to \eqref{mainequation} converges exponentially to 0 as time diverges.}
\end{center}
A case covered by our study is the one in which the the equation in \eqref{mainequation} is
\[
y_{t t}-x^\alpha y_{x x}-\mu x^\beta y_x-\ds \frac{\lambda}{x^\gamma}y=0.
\]
In this case, we have exponential stability provided that 
\[
\beta>0,\quad \beta>\alpha-1,\quad 2-\alpha-2\gamma> 2\mu \ \mbox{ and $\ \lambda$ small enough.}
\]

The paper is organized as follows. In Section \ref{sec2} we give the functional setting and some technical tools that we will use in the rest of the paper, together with the existence of solutions. In Section \ref{sec3} we introduce the energy associated to a solution of the problem and, by a multiplier method, we show that it decays exponentially as time diverges. In the Appendix we prove the existence theorem stated in Section \ref{sec2}.

\section{Preliminary results and well posedness}\label{sec2}
In this section we introduce the functional setting needed to face our problem, then we give some preliminary results about the function spaces we treat and finally we prove the well posedness result.

Following \cite{fms}, we start assuming a very modest requirement.
\begin{hypothesis}\label{hyp1}
The functions $a, b, d \in C^0[0,1]$ are such that 
\begin{enumerate}
\item $\ds \frac{b}{a} \in L^1(0,1)$,
\item $a(0)=d(0)=0$, $a,d>0$ on $(0,1]$,
\item there exist $K_1, K_2 \in(0,2)$ such that  the functions
\begin{equation}\label{ipoa}
x \longmapsto \frac{x^{K_1}}{a(x)}
\end{equation}
and
\begin{equation}\label{ipod}
x \longmapsto \frac{x^{K_2}}{d(x)}
\end{equation}
are nondecreasing in a right neighbourhood of $x=0$.
\end{enumerate}
\end{hypothesis}

\begin{remark}
If $a$ is (WD) or (SD), then \eqref{WD} implies that the function
\begin{align}\label{nondecreasxg/a}
x \mapsto \frac{x^\gamma}{a(x)} \text{ is nondecreasing in } (0,1] \text{ for all } \gamma \geq K_a.
\end{align}
In particular, \eqref{ipoa} holds and

\begin{align}\label{boundxgamba}
\left|\frac{x^\gamma b(x)}{a(x)}\right| \leq\tilde{M}
\end{align}
for all $\gamma \geq K_a$, where
\begin{align}\label{Mtilde}
\tilde{M}:=\frac{\|b\|_{L^{\infty}(0,1)}}{a(1)} .
\end{align}
Moreover,
\begin{align}\label{limitxgamma/a}
\lim _{x \rightarrow 0} \frac{x^\gamma}{a(x)}=0
\end{align}
for all $\gamma>K_a$.

 Analogously, if $d$ is (WD) or (SD), then
\begin{align}\label{nondecreasxg/d}
x \mapsto \frac{x^\gamma}{d(x)} \text{ is nondecreasing in } (0,1]
\end{align}
 for all  $\gamma \geq K_d$ and
\begin{align}\label{limitxgamma/da}
\lim _{x \rightarrow 0} \frac{x^\gamma}{d(x)}=0
\end{align}
for all $\gamma>K_d$.
\end{remark}

\begin{remark}
Concerning the notation, we will use $K_1$ ($K_2$, respectively) when Hypothesis \ref{hyp1} holds, and $K_a$ ($K_d$, respectively) when the global degeneracy conditions (WD) or (SD) hold true.
\end{remark}

Now set
\[
\sigma(x):=\frac{a(x)}{\eta(x)}, \quad x\in [0,1],
\]
and consider, as in \cite{fm},
the following Hilbert spaces with the related inner products
\[
 L^2_{\frac{1}{\sigma}}(0,1) :=\left\{ u \in L^2(0,1)\; \big|\; \int_0^1 \frac{u^2}{\sigma}dx <\infty \right\},
 \;  \langle u,v\rangle_{\frac{1}{\sigma}}:= \int_0^1u v\frac{1}{\sigma}dx
\]
for every $u,v \in L^2_{\frac{1}{\sigma}}(0,1)$ and
\[
H^1_{\frac{1}{\sigma}}(0,1) :=L^2_{\frac{1}{\sigma}}(0,1)\cap
H^1(0,1),
\;  \langle u,v\rangle_{1,\frac{1}{\sigma}}  :=   \langle u,v\rangle_{\frac{1}{\sigma}} + \int_0^1\eta  u'v'dx,\]  
for every $u,v \in H^1_{\frac{1}{\sigma}}(0,1)$. The previous inner products obviously induce the related norms
\[
\|u\|^2_{\frac{1}{\sigma}} = \int_0^1 \frac{u^2}{\sigma}dx \quad \text{and} \quad
\|u\|^2_{1,\frac{1}{\sigma}} = \|u\|^2_{\frac{1}{\sigma}} + \int_0^1\eta (u')^2 dx.
\]
Moreover, consider the space
\[
H^1_{\frac{1}{\sigma},0}(0,1) := \left\{ u \in H^1_{\frac{1}{\sigma}}(0,1) : u(0)=0\right\},
\]
endowed with the same inner product and norm of $H^1_{\frac{1}{\sigma}}(0,1)$.  Clearly, thanks to \eqref{ipoa}, and \eqref{ipod}, if $u \in H^1_{\frac{1}{\sigma},0}(0,1)$, then one can estimate the two integrals $\ds \int_0^1  \frac{u^2}{\sigma} dx$ and $\ds \int_0^1\frac{u^2}{\sigma d} dx$ with the norm of $u'$.
Indeed, as in \cite{BFM2022} and in \cite{fms}, the following Hardy-Poincar\'e-type inequalities hold.
\begin{proposition}\label{propL2}
Assume Hypothesis $\ref{hyp1}$. Then,
there exists
 $C>0$ such that for all $u \in H^1_{\frac{1}{\sigma},0}(0,1)$,
 \begin{equation}\label{stima1new}\int_0^1 u^2 \frac{1}{\sigma} dx
 \le C \int_0^1(u')^2dx.
 \end{equation}
Moreover, if $K_1+ K_2 \le 2$ and $u \in H^1_{\frac{1}{\sigma},0}(0,1)$ then $\ds\frac{u}{\sqrt{\sigma d}} \in L^2(0,1)$ and
there exists a positive constant $C>0$ such that
\begin{equation}\label{stima1}
\int_0^1 \frac{u^2}{\sigma d}dx \le C \int_0^1\eta (u'(x))^2dx.
\end{equation}
\end{proposition}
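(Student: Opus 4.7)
The plan is to reduce both inequalities to weighted Hardy-type estimates with pure power weights near $0$, and then to apply either an elementary bound or the classical Hardy inequality according to the strength of the singularity. First I would observe that, since $b/a \in L^1(0,1)$ by Hypothesis~\ref{hyp1}, the integral $\int_{1/2}^x b(s)/a(s)\,ds$ has a finite limit as $x\to 0^+$; therefore $\eta \in C^0[0,1]$ and is bounded above and below by strictly positive constants $\eta_-\le\eta(x)\le \eta_+$ on $[0,1]$. Consequently $\int_0^1\eta\,(u')^2\,dx$ and $\int_0^1(u')^2\,dx$ are equivalent norms of $u'$, while $1/\sigma\le \eta_+/a$ and $1/(\sigma d)\le \eta_+/(ad)$. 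It is therefore enough to prove that $\int_0^1 u^2/a\,dx \le C\int_0^1 (u')^2\,dx$ and that, under the extra hypothesis $K_1+K_2\le 2$, $\int_0^1 u^2/(ad)\,dx \le C\int_0^1 (u')^2\,dx$.

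Next I would fix $x_0 \in (0,1]$ small enough that both \eqref{ipoa} and \eqref{ipod} hold on $(0,x_0]$; monotonicity then yields
\[
\frac{1}{a(x)} \le \frac{x_0^{K_1}}{a(x_0)}\,\frac{1}{x^{K_1}}, \qquad \frac{1}{d(x)} \le \frac{x_0^{K_2}}{d(x_0)}\,\frac{1}{x^{K_2}} \qquad (x\in(0,x_0]),
\]
while $1/a$ and $1/d$ are bounded on $[x_0,1]$ by continuity. The elementary pointwise estimate $|u(x)|^2\le x\int_0^1 (u')^2\,dt$, which follows from $u(0)=0$ and the Cauchy--Schwarz inequality, immediately gives the required control on $[x_0,1]$; hence the substantive work is to bound $\int_0^{x_0} u^2/x^{K_1}\,dx$ and $\int_0^{x_0}u^2/x^{K_1+K_2}\,dx$ by $\int_0^1(u')^2\,dx$.

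For the first of these, combining the pointwise bound with the fact that $1-K_1>-1$ (since $K_1\in(0,2)$) gives
\[
\int_0^{x_0} \frac{u(x)^2}{x^{K_1}}\,dx \le \int_0^1 (u'(t))^2\,dt \cdot \int_0^{x_0} x^{1-K_1}\,dx = \frac{x_0^{2-K_1}}{2-K_1}\int_0^1 (u'(t))^2\,dt,
\]
which, together with the reductions above, proves \eqref{stima1new}. For the second, I would exploit that $K_1+K_2\le 2$ together with $x\in(0,1]$ implies $x^{-(K_1+K_2)}\le x^{-2}$, and then invoke the classical one-dimensional Hardy inequality $\int_0^1 u^2/x^2\,dx \le 4\int_0^1(u')^2\,dx$, valid because $u(0)=0$ makes the boundary term at $0$ in the integration-by-parts proof vanish (indeed $u(x)^2/x \le \int_0^x (u')^2\to 0$ as $x\to 0$). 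The main subtlety is precisely the borderline case $K_1+K_2=2$: there the naive estimate used for the first inequality breaks down, because $\int_0^{x_0} x^{-1}\,dx$ diverges, and one genuinely needs the integration-by-parts form of Hardy to close the estimate.
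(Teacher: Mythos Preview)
Your argument is correct. The reduction via the uniform bounds $0<\eta_-\le\eta\le\eta_+$ (granted by $b/a\in L^1$) is exactly the right first move; the monotonicity of $x^{K_1}/a$ and $x^{K_2}/d$ near $0$ then localises all the difficulty to a power singularity, and both the elementary estimate $|u(x)|^2\le x\int_0^1(u')^2$ (for \eqref{stima1new}) and the classical Hardy inequality $\int_0^1 u^2/x^2\,dx\le 4\int_0^1(u')^2\,dx$ (for \eqref{stima1}) apply as you say. Your remark that the borderline $K_1+K_2=2$ forces the use of the integration-by-parts Hardy inequality, rather than the naive pointwise bound, is the key observation.

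As for comparison: the paper does not actually prove this proposition. It simply states that ``as in \cite{BFM2022} and in \cite{fms}, the following Hardy--Poincar\'e-type inequalities hold'' and moves on. Your write-up is therefore more, not less, than what the paper offers here, and supplies a clean self-contained justification. The strategy you use---reduce to power weights near the degeneracy via the monotonicity hypothesis, then invoke a Hardy-type inequality---is the standard one and is in the same spirit as the arguments in the cited references, so there is no methodological divergence; you have just filled in what the paper takes for granted.
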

Let
\begin{equation}\label{CHP}
\tilde C_{HP}  \text{ and } C_{HP} \text{ be the best constants in \eqref{stima1new}  and \eqref{stima1},}
\end{equation}
respectively. Clearly, $\tilde C_{HP} \le  C_{HP}\max_{[0,1]}d \max_{[0,1]}\eta $. 
Moreover, under an additional assumption on $\lambda$, one can consider a different norm on $H^1_{\frac{1}{\sigma},0}(0,1) $. Assume the following:
\begin{hypothesis}\label{hyp2}
The constant $\lambda \in \R\setminus \{0\}$ is such that
\begin{equation}\label{lambda}
\lambda < \frac{1}{C_{HP}}.
\end{equation}
\end{hypothesis}
\noindent Observe that it is not restrictive to assume $\lambda \neq0$, since  in \cite{fm} the case $\lambda =0$ is considered.

If $K_1+ K_2 \le 2$ and if Hypotheses \ref{hyp1} and \ref{hyp2} hold, then one can  consider on
$
H^1_{\frac{1}{\sigma},0}(0,1) $ 
also the inner product
\[
\;  \langle u,v\rangle_1 :=   \langle u,v\rangle_{\frac{1}{\sigma}} + \int_0^1 \eta u'v'dx -\lambda \int_0^1 \frac{uv}{\sigma d},
\]
which induces the norm
\[
\|u\|_1^2:= \|u\|^2_{\frac{1}{\sigma}} + \int_0^1\eta (u')^2 dx-\lambda \int_0^1 \frac{u^2}{\sigma d},
\]
for all $
u, v \in H^1_{\frac{1}{\sigma},0}(0,1) $.
In particular, setting
\[
\|u\|_{1, \circ}^2:= \int_0^1 (u')^2dx\]
and
\[  \|u\|_{1, \bullet}^2:= \int_0^1\eta (u')^2dx
\]
for all $u \in H^1_{\frac{1}{\sigma},0}(0,1)$, one has the following equivalence thanks to Proposition \ref{propL2}:
\begin{corollary}
\label{equivalenze}Assume Hypotheses $\ref{hyp1}$ and $\ref{hyp2}$. If $K_1+ K_2\le 2$, then  the norms
$
\|\cdot\|_{1,\frac{1}{\sigma}}$, $\|\cdot\|_1$,
$
\|\cdot\|_{1, \circ}$ and  $\|\cdot\|_{1, \bullet}$
are equivalent in $H^1_{\frac{1}{\sigma},0}(0,1)$. 
\end{corollary}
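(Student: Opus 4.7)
The plan is to establish a chain of equivalences linking the four norms. The simplest comparison, that between $\|\cdot\|_{1,\circ}$ and $\|\cdot\|_{1,\bullet}$, is a direct consequence of the fact that $\eta\in C^0[0,1]$ with $\eta>0$ on $[0,1]$: then $\eta$ is bounded above and below by positive constants $\eta^*$ and $\eta_0$, so the two norms are equivalent with constants depending only on $\eta$. To bring $\|\cdot\|_{1,\frac{1}{\sigma}}$ into the picture, I would observe that $\|u\|^2_{1,\bullet}\le\|u\|^2_{1,\frac{1}{\sigma}}$ by definition, while the Hardy--Poincar\'e inequality \eqref{stima1new} combined with the lower bound on $\eta$ lets us control $\|u\|^2_{\frac{1}{\sigma}}$ by a multiple of $\|u\|^2_{1,\bullet}$, giving the reverse estimate.

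The delicate step is comparing $\|\cdot\|_1$ with $\|\cdot\|_{1,\frac{1}{\sigma}}$: here the singular term $-\lambda\int_0^1\frac{u^2}{\sigma d}\,dx$ appears, and one must use the second Hardy--Poincar\'e inequality \eqref{stima1}, which gives $\int_0^1\frac{u^2}{\sigma d}\,dx\le C_{HP}\|u\|^2_{1,\bullet}\le C_{HP}\|u\|^2_{1,\frac{1}{\sigma}}$. I would then split according to the sign of $\lambda$. If $\lambda<0$, the perturbation is nonnegative, so $\|u\|_1\ge\|u\|_{1,\frac{1}{\sigma}}$, while the upper bound $\|u\|^2_1\le(1+|\lambda|C_{HP})\|u\|^2_{1,\frac{1}{\sigma}}$ is immediate. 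If $0<\lambda<1/C_{HP}$, the lower bound reads
\[
\|u\|^2_1\ge\|u\|^2_{\frac{1}{\sigma}}+(1-\lambda C_{HP})\int_0^1\eta(u')^2\,dx\ge(1-\lambda C_{HP})\|u\|^2_{1,\frac{1}{\sigma}},
\]
while the upper bound $\|u\|^2_1\le\|u\|^2_{1,\frac{1}{\sigma}}$ is trivial since one is subtracting a nonnegative term.

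The main obstacle is exactly this sign-sensitive step, and it is precisely what motivates Hypothesis \ref{hyp2}: the smallness condition $\lambda<1/C_{HP}$ ensures $1-\lambda C_{HP}>0$, so $\|\cdot\|_1$ is actually a norm (not merely a seminorm) and is comparable \emph{from below} with $\|\cdot\|_{1,\frac{1}{\sigma}}$. Once this is clear, the four equivalences chain together and the corollary follows. Note that the hypothesis $K_1+K_2\le 2$ enters only implicitly, through Proposition \ref{propL2}, since \eqref{stima1} -- indispensable for controlling the singular term -- is available precisely under that constraint.
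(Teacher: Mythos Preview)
Your argument is correct and is precisely the elaboration the paper has in mind: the corollary is stated without proof, merely as a consequence of Proposition~\ref{propL2}, and your chain of equivalences via the two Hardy--Poincar\'e inequalities \eqref{stima1new}, \eqref{stima1} together with the boundedness of $\eta$ away from $0$ and $\infty$ is exactly how one fills in the details. The sign split on $\lambda$ and the role of Hypothesis~\ref{hyp2} in guaranteeing $1-\lambda C_{HP}>0$ are handled correctly.
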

Now define the operator
\begin{equation}\label{defsigma}
Ay:=ay''+by'= \sigma(\eta y')',
\end{equation}
for all $y \in D(A)$, where $D(A)$ is the Hilbert space
\[
D(A)=H^2_{\frac{1}{\sigma},0}(0,1) := \Big\{ u \in
H^1_{\frac{1}{\sigma},0}(0,1)\; \big|\;Au \in
L^2_{\frac{1}{\sigma}}(0,1)\Big\},
\]
with inner product
\[
 \langle u,v\rangle_{2, \frac{1}{\sigma}} := \langle u,v\rangle_{1, \frac{1}{\sigma}}+  \langle Au,Av\rangle_{\frac{1}{\sigma}}\]
 or
\[
 \langle u,v\rangle_{2} := \langle u,v\rangle_1+  \langle Au,Av\rangle_{\frac{1}{\sigma}}.\]

In the rest of the paper the next lemma is crucial; since its
proof is similar to the one of \cite[Lemma 3.2]{fms}, we omit it.
\begin{lemma}\label{lemmalimits}Assume Hypothesis $\ref{hyp1}.$
\begin{enumerate}
\item If $y \in D(A)$ and $u \in H^1_{\frac{1}{\sigma},0}(0,1)$, then $\lim\limits_{x \rightarrow 0} u(x) y'(x)=0$.
\item If  $u \in H^1_{\frac{1}{\sigma},0}(0,1)$, then   $\ds
\lim_{x\rightarrow 0} \frac{x}{a}u^2(x)=0.$
\item If  $u \in H^1_{\frac{1}{\sigma},0}(0,1)$ and $K_1+ K_2 <2$, then   $\ds
\lim_{x\rightarrow 0} \frac{x}{ad}u^2(x)=0.$
\item If  $u \in H^1_{\frac{1}{\sigma}}(0,1)$, then   $\ds
\lim_{x\rightarrow 0} \frac{x^2}{ad}u^2(x)=0.$
\item  If $u\in D(A)$, then $\ds\lim_{x\rightarrow 0} x^2  (u'(x))^2=0$.
\item If  $u \in D(A)$, $K_1 \le 1$, then $\displaystyle\lim_{x\rightarrow 0} x  (u'(x))^2=0$.
\item If  $u \in D(A)$,  $K_1>1$ and $\ds\frac{xb}{a} \in L^\infty(0,1), $ then $\ds\lim_{x\rightarrow 0} x  (u'(x))^2=0$.
\end{enumerate}
\end{lemma}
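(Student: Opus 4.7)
The plan is to combine three basic tools uniformly across the seven statements: the identity $Au = \sigma(\eta u')'$ valid for $u\in D(A)$; the Poincar\'e-type bound $|u(x)|^2 \le x\int_0^x (u')^2\,ds$ whenever $u(0)=0$; and the pointwise consequences of \eqref{ipoa}--\eqref{ipod} giving $1/a(x) \le c\,x^{-K_1}$ and $1/d(x) \le c\,x^{-K_2}$ in a right neighbourhood of $0$. Since $b/a\in L^1(0,1)$, the weight $\eta$ is bounded away from $0$ and $\infty$ on $[0,1]$, so all $\eta$-weighted $L^2$-norms are equivalent to the unweighted ones; in particular, $\int_0^x(u')^2\,ds \to 0$ whenever $u\in H^1_{\frac{1}{\sigma},0}(0,1)$, which yields the key tail estimate $u(x)=o(\sqrt{x})$.

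For items (2) and (3), I would simply multiply out the weights and apply these bounds: since $u(0)=0$,
\[
\frac{x}{a(x)}u^2(x) \leq c\,x^{1-K_1}\,u^2(x) = o(x^{2-K_1}),\qquad \frac{x}{a(x)d(x)}u^2(x) = o(x^{2-K_1-K_2}),
\]
both vanishing thanks to $K_1<2$ and $K_1+K_2<2$ respectively. Item (4) is more delicate because $u(0)$ need not vanish: I would split $u = u(0) + (u-u(0))$, handle the $u-u(0)$ piece as above using $|u(x)-u(0)| = O(\sqrt{x})$, and treat the constant contribution by exploiting the $L^2_{\frac{1}{\sigma}}$-integrability of $u$ near $0$ together with the monotonicity of $x^{K_1}/a$ and $x^{K_2}/d$.

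For items (5)--(7), the main ingredient will be the representation
\[
\eta(x)u'(x) = \eta(1)u'(1) - \int_x^1 \frac{Au(s)}{\sigma(s)}\,ds,
\]
valid for $u\in D(A)$, to which Cauchy--Schwarz in $L^2_{\frac{1}{\sigma}}(0,1)$ yields
\[
|\eta(x)u'(x)|^2 \le C\Bigl(1 + \int_x^1 \frac{1}{\sigma(s)}\,ds\Bigr),
\]
with tail integral of order $1$, $|\log x|$, or $x^{1-K_1}$ according to whether $K_1<1$, $K_1=1$, or $K_1>1$. Multiplying by $x^2$ and invoking $K_1<2$ will give (5); multiplying by $x$ and splitting the first two regimes will give (6); in (7), where $K_1>1$, the extra assumption $xb/a\in L^\infty(0,1)$ will control $\eta'/\eta$ and keep the passage from $\eta u'$ to $u'$ clean.

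For item (1) I would finally write $u(x)y'(x) = \frac{u(x)}{\eta(x)}\cdot \eta(x)y'(x)$ and combine $|u(x)| = o(\sqrt{x})$ with $|\eta(x)y'(x)| = O\bigl(1+x^{(1-K_1)/2}\bigr)$ from the representation above; the product is $o(x^{1-K_1/2})$, which vanishes since $K_1<2$. The hardest part throughout the lemma is precisely the regime $K_1 \ge 1$ in items (1), (6), (7): there $\eta u'$ is no longer bounded near $0$, and the Cauchy--Schwarz tail estimate must be balanced exactly against the Hardy-type decay of $u$ in order for each product to still tend to zero.
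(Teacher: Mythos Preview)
Your overall strategy---the representation $\eta(x)u'(x)=\eta(1)u'(1)-\int_x^1 \frac{Au}{\sigma}\,ds$ combined with Cauchy--Schwarz in $L^2_{1/\sigma}$, together with the Hardy-type bound $u^2(x)=o(x)$ for $u(0)=0$---is clean and correct for items (1)--(3) and (5)--(7). The paper itself omits the proof entirely, deferring to \cite{fms}, so there is no ``paper's own proof'' to compare with; your route is natural and efficient. Two points deserve comment.

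\textbf{Item (4) has a genuine gap.} Your split $u=u(0)+(u-u(0))$ yields for the nonconstant piece only
\[
\frac{x^2}{ad}\,\bigl(u(x)-u(0)\bigr)^2 \;\le\; C\,x^{2-K_1-K_2}\cdot o(x)\;=\;o\bigl(x^{3-K_1-K_2}\bigr),
\]
which need not vanish when $K_1+K_2\ge 3$; and the vague appeal to $L^2_{1/\sigma}$-integrability for the constant piece cannot be completed in general either. In fact item (4) is \emph{false} under Hypothesis~\ref{hyp1} alone: take $a(x)=d(x)=x^{9/5}$, $b\equiv 0$ (so $\eta\equiv 1$, $\sigma=a$, $K_1=K_2=9/5$) and $u(x)=x^{3/5}$. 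Then $u\in H^1(0,1)$ since $\int_0^1 (u')^2=\tfrac{9}{25}\int_0^1 x^{-4/5}<\infty$, and $\int_0^1 u^2/\sigma=\int_0^1 x^{-3/5}<\infty$, so $u\in H^1_{1/\sigma}(0,1)$; yet $\frac{x^2}{ad}u^2=x^{-2/5}\to\infty$. The claim becomes trivial once $K_1+K_2<2$ (then $\frac{x^2}{ad}\le Cx^{2-K_1-K_2}\to 0$ and $u$ is bounded), which is the regime the paper actually uses via $K_a+2K_d\le 2$. You should either add that hypothesis explicitly or flag the discrepancy rather than claim a proof.

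\textbf{Item (7):} your stated reason for invoking $\frac{xb}{a}\in L^\infty$---to control $\eta'/\eta$ in the passage from $\eta u'$ to $u'$---is unnecessary. Since $b/a\in L^1(0,1)$, $\eta$ is already bounded above and below by positive constants, so $(u')^2\le C(\eta u')^2$ directly. Your Cauchy--Schwarz bound then gives $x(u')^2\le Cx\bigl(1+\int_x^1\frac{1}{\sigma}\bigr)\le C(x+x^{2-K_1})\to 0$ for every $K_1\in(1,2)$, with no extra hypothesis needed. The assumption $\frac{xb}{a}\in L^\infty$ is presumably an artifact of a different argument in \cite{fms} (e.g.\ one that differentiates $x(u')^2$ and meets the term $\frac{xb}{a}(u')^2$); your approach is in fact stronger here.
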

Thanks to the first point of the previous lemma one has the following integration by parts
\begin{lemma}\label{Lemma2.1} (\cite[Lemma 2.1]{fm}) 
Assume Hypothesis $\ref{hyp1}$. If 
$u\in H^2_{{\frac{1}{\sigma},0}}(0,1)$ and $v\in H^1_{{\frac{1}{\sigma},0}}(0,1)$, then
\[
 \langle Au,v\rangle_{\frac{1}{\sigma}}=-\int_0^1\eta u'v'dx+ [\eta u'v](1).
\]
\end{lemma}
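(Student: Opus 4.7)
The plan is to exploit the divergence form $Au=\sigma(\eta u')'$ provided by \eqref{defsigma} and then to integrate by parts on $(0,1)$, the only nontrivial point being the degenerate endpoint $x=0$. First, from \eqref{defsigma} one has $(\eta u')'=Au/\sigma$ on $(0,1]$, so
\[
\langle Au,v\rangle_{\frac{1}{\sigma}}=\int_0^1\frac{Au}{\sigma}\,v\,dx=\int_0^1 (\eta u')'\,v\,dx,
\]
and the integrand lies in $L^1(0,1)$ by Cauchy--Schwarz, since $\int_0^1\frac{|Au|\,|v|}{\sigma}\,dx\le \|Au\|_{\frac{1}{\sigma}}\|v\|_{\frac{1}{\sigma}}$, both factors being finite by hypothesis.

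Next, I would fix $\varepsilon\in(0,1)$ and note that, because $a>0$ on $[\varepsilon,1]$ and $Au\in L^2_{\frac{1}{\sigma}}(0,1)$, solving $au''=Au-bu'$ for $u''$ shows $u\in H^2(\varepsilon,1)$; by Sobolev embedding in dimension one this gives $u\in C^1[\varepsilon,1]$, so the classical integration by parts yields
\[
\int_\varepsilon^1 (\eta u')'\,v\,dx=\eta(1)u'(1)v(1)-\eta(\varepsilon)u'(\varepsilon)v(\varepsilon)-\int_\varepsilon^1\eta u'v'\,dx.
\]
Letting $\varepsilon\to 0^+$, the left-hand side converges to $\int_0^1(\eta u')'v\,dx$ by dominated convergence, while the last term on the right converges to $\int_0^1\eta u'v'\,dx$ thanks to Cauchy--Schwarz combined with the fact that $u,v\in H^1_{\frac{1}{\sigma},0}(0,1)$ imply $\int_0^1\eta|u'||v'|\,dx\le\|u\|_{1,\bullet}\|v\|_{1,\bullet}<\infty$.

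The main (and essentially only) obstacle is the boundary term at $x=\varepsilon$: one must show that $\eta(\varepsilon)u'(\varepsilon)v(\varepsilon)\to 0$ as $\varepsilon\to 0^+$. Since $\eta\in C^0[0,1]$ is bounded near $0$, this reduces to checking that $u'(\varepsilon)v(\varepsilon)\to 0$, which is precisely the content of Lemma \ref{lemmalimits}(1) applied with its pair $(y,u)$ taken to be our $(u,v)$: namely $u\in D(A)$ and $v\in H^1_{\frac{1}{\sigma},0}(0,1)$ imply $\lim_{x\to 0}v(x)u'(x)=0$. Passing to the limit in the identity above then produces the claimed formula.
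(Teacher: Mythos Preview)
Your proof is correct and follows exactly the route the paper itself indicates: the paper does not give a detailed proof of this lemma but cites \cite[Lemma 2.1]{fm} and explicitly remarks, just before stating it, that the integration-by-parts formula follows from point~(1) of Lemma~\ref{lemmalimits}. Your argument---writing $Au=\sigma(\eta u')'$, integrating by parts on $[\varepsilon,1]$ via the local $H^2$ regularity, and killing the boundary term at $\varepsilon$ with Lemma~\ref{lemmalimits}(1)---is precisely the standard justification behind that citation.
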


Now, define 
\begin{equation}\label{Alambda}
A_\lambda u:= Au + \lambda\frac{u}{d}\end{equation}
for all $u \in D(A_\lambda):=\left\{u \in H^2_{\frac{1}{\sigma},0}(0,1): A_\lambda u\in L^2_{\frac{1}{\sigma}}(0,1)\right\}$. Clearly, $D(A_\lambda) \subseteq D(A)$; anyway, if $K_2 \le 1$, we have $\ds \frac{u}{d} \in L^2(0,1)$, and so
\[
D(A)=D(A_\lambda);
\]
however, in general such an inequality does not hold, unless $K_1+ 2K_2 \le 2$ (see \cite{fms}). Moreover, by Lemma \ref{Lemma2.1}, one has that if Hypothesis \ref{hyp1} is satisfied, then 
\begin{equation}\label{conal}
 \langle A_\lambda u,v\rangle_{\frac{1}{\sigma}}=-\int_0^1\eta u'v'dx+ \lambda\int_0^1\frac{u v}{\sigma d}dx+ [\eta u'v](1),
\end{equation}
for all $u\in H^2_{{\frac{1}{\sigma},0}}(0,1)$ and $v\in H^1_{{\frac{1}{\sigma},0}}(0,1)$. Thus, thanks to the new operator $(A_\lambda, D(A_\lambda))$, one can prove the next existence and regularity result, whose proof, being standard, is postponed to the Appendix.
\begin{thm}\label{thmmildclassolution}
Assume Hypotheses $\ref{hyp1}$ and $\ref{hyp2}$ with $K_1+ 2K_2 \le 2$. If $\left(y_0, y^1\right) \in H^1_{{\frac{1}{\sigma},0}}(0,1)\times L^2_{{\frac{1}{\sigma}}}(0,1)$, then there exists a unique mild solution
$$
y \in C^1\left([0,+\infty) ; L_{\frac{1}{\sigma}}^2(0,1)\right) \cap C\left([0,+\infty) ; H_{\frac{1}{\sigma},0}^1(0,1)\right)
$$
of \eqref{mainequation} which depends continuously on the initial data $\left(y_0, y_1\right) $. Moreover, if $\left(y_0, y_1\right) \in H^2_{{\frac{1}{\sigma},0}}(0,1) \times H^1_{{\frac{1}{\sigma},0}}(0,1)$, then the solution $y$ is classical, in the sense that
$$
y \in C^2\left([0,+\infty) ; L_{\frac{1}{\sigma}}^2(0,1)\right) \cap C^1\left([0,+\infty) ; H_{\frac{1}{\sigma},0}^1(0,1)\right) \cap C\left([0,+\infty); D(A_\lambda)\right)
$$
and the equation of \eqref{mainequation} holds for all $t \geq 0$.
\end{thm}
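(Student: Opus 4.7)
The plan is to recast \eqref{mainequation} as a first-order abstract Cauchy problem $U'(t) = \mathcal{A}U(t)$, $U(0)=(y_0,y_1)$, on the Hilbert space
\[
\mathcal{H} := H^1_{\frac{1}{\sigma},0}(0,1) \times L^2_{\frac{1}{\sigma}}(0,1),
\]
equipped with the \emph{energy} inner product
\[
\langle (y,v),(\tilde y,\tilde v)\rangle_{\mathcal{H}} := \int_0^1 \eta\, y' \tilde y'\, dx -\lambda \int_0^1 \frac{y\tilde y}{\sigma d}\, dx + \int_0^1 \frac{v\tilde v}{\sigma}\, dx + \beta\, \eta(1)\, y(1)\tilde y(1).
\]
Thanks to Hypothesis \ref{hyp2}, Corollary \ref{equivalenze}, and $\beta \geq 0$, this yields a genuine Hilbert space norm equivalent to the product norm on $H^1_{\frac{1}{\sigma},0}(0,1)\times L^2_{\frac{1}{\sigma}}(0,1)$. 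The generator is
\[
\mathcal{A}(y,v) := (v,\, A_\lambda y), \qquad D(\mathcal{A}) := \Bigl\{ (y,v) \in D(A_\lambda) \times H^1_{\frac{1}{\sigma},0}(0,1) : v(1) + \eta(1) y'(1) + \beta y(1) = 0 \Bigr\},
\]
where the assumption $K_1 + 2K_2 \leq 2$ ensures $D(A_\lambda) = D(A)$, so $D(\mathcal{A})$ is non-trivial and dense in $\mathcal{H}$. I would then apply the Lumer-Phillips theorem.

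For dissipativity, I would compute, for $(y,v) \in D(\mathcal{A})$,
\[
\langle \mathcal{A}(y,v),(y,v)\rangle_{\mathcal{H}} = \int_0^1 \eta v'y'\, dx - \lambda \int_0^1 \frac{vy}{\sigma d}\, dx + \int_0^1 \frac{A_\lambda y \cdot v}{\sigma}\, dx + \beta \eta(1) v(1) y(1).
\]
Using the integration-by-parts identity \eqref{conal} with $u=y$ and test function $v$, the first three integrals collapse to $\eta(1) y'(1) v(1)$; the dynamic boundary condition then yields
\[
\langle \mathcal{A}(y,v),(y,v)\rangle_{\mathcal{H}} = v(1)\bigl[\eta(1) y'(1) + \beta y(1)\bigr] = -v(1)^2 \leq 0.
\]

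For the range condition, given $(f,g) \in \mathcal{H}$, I would solve $(I-\mathcal{A})(y,v)=(f,g)$, which reduces to $v=y-f$ together with $(I-A_\lambda)y = f+g$ supplemented by $\eta(1) y'(1) + (1+\beta) y(1) = f(1)$ at $x=1$ (the trace $f(1)$ is well defined since $\eta>0$ on $[0,1]$ gives $H^1_{\frac{1}{\sigma},0}(0,1) \hookrightarrow C^0[0,1]$). The weak formulation seeks $y \in H^1_{\frac{1}{\sigma},0}(0,1)$ with $\mathcal{B}(y,\phi) = \ell(\phi)$ for all $\phi \in H^1_{\frac{1}{\sigma},0}(0,1)$, where
\[
\mathcal{B}(y,\phi) := \int_0^1 \frac{y\phi}{\sigma}\, dx + \int_0^1 \eta y'\phi'\, dx - \lambda \int_0^1 \frac{y\phi}{\sigma d}\, dx + (1+\beta) y(1)\phi(1)
\]
and $\ell(\phi) := \int_0^1 (f+g)\phi/\sigma\, dx + f(1)\phi(1)$. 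Coercivity follows from $\mathcal{B}(y,y) \geq \|y\|_1^2$, which is a norm thanks to Hypothesis \ref{hyp2}, while continuity is routine via the trace embedding and Corollary \ref{equivalenze}. Lax-Milgram produces a unique $y$; testing first against $C_c^\infty(0,1)$ shows $A_\lambda y \in L^2_{\frac{1}{\sigma}}(0,1)$, so $y \in D(A_\lambda)$, and a second integration by parts through \eqref{conal} recovers the boundary condition at $x=1$, producing the required $(y,v)\in D(\mathcal{A})$.

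Lumer-Phillips then guarantees that $\mathcal{A}$ generates a $C_0$-semigroup of contractions on $\mathcal{H}$, and standard semigroup theory delivers the mild solution for data in $\mathcal{H}$, the classical solution for data in $D(\mathcal{A})$, and the continuous dependence on initial data. The main technical obstacle will be justifying the integrations by parts at the degenerate endpoint $x=0$: the vanishing limits collected in Lemma \ref{lemmalimits}, together with the compatibility between $K_1$, $K_2$ and $b/a \in L^1(0,1)$ imposed in Hypothesis \ref{hyp1}, are precisely what make \eqref{conal} rigorous and prevent any spurious boundary contribution at $0$ from appearing either in the dissipativity computation or in the recovery of the boundary condition from the weak formulation.
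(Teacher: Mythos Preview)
Your approach is essentially identical to the paper's: you set up the same energy Hilbert space, the same operator $\mathcal{A}$ with the boundary condition encoded in its domain, prove dissipativity via \eqref{conal}, and obtain surjectivity of $I-\mathcal{A}$ through the same Lax--Milgram argument with the same bilinear form $\mathcal{B}$ and functional $\ell$; the paper invokes \cite[Corollary 3.20]{nagel}, which is exactly the Lumer--Phillips result you name. One small slip: the boundary term in your inner product should be $\beta\, y(1)\tilde y(1)$ rather than $\beta\,\eta(1)\, y(1)\tilde y(1)$, since otherwise the dissipativity computation yields $v(1)\bigl[\eta(1) y'(1)+\beta\,\eta(1)\, y(1)\bigr]$, which does not match the boundary relation $\eta(1) y'(1)+\beta y(1)=-v(1)$; indeed you silently drop the extra $\eta(1)$ in the line where you conclude $-v(1)^2$, so this is a typo rather than a conceptual gap.
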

Observe that the assumption $K_1+2K_2\le 2$ implies $K_2 < 1$. This assumption is also crucial to prove the second part of the next variational result. The case which corresponds to a singular being strongly degenerate with $K_d \ge 1$ is an open problem.

The next  preliminary result will be crucial for the following. 
\begin{proposition}\label{Prop2.2} Assume Hypotheses $\ref{hyp1}$ and $\ref{hyp2}$ and for $\beta \ge 0$ define
\[
|||z|||_1^2:= \int_0^1 \eta (z')^2dx - \lambda \int_0^1 \frac{z^2}{\sigma d}dx +\beta z^2(1)
\] 
for all  $z \in H^1_{\frac{1}{\sigma},0}(0,1)$. Then the norms $|||\cdot|||_1$,  $||\cdot||_{1, \frac{1}{\sigma}}$, $||\cdot||_1$, $||\cdot||_{1, \bullet}$ and $\|\cdot\|_{1, \circ}$ are equivalent in $H^1_{\frac{1}{\sigma},0}(0,1)$. Moreover, for every $\gamma\in \R$, the variational problem
\begin{equation}\label{varionalproblem}
\int_0^1 \eta z'\phi' dx-\lambda \int_0^1 \frac{z\varphi}{\sigma d}+\beta z(1)\phi(1) =\gamma\phi(1), \quad \forall \; \phi \in H^1_{\frac{1}{\sigma},0}(0,1),
\end{equation}
admits a  unique solution $Z \in H^1_{\frac{1}{\sigma},0}(0,1)$ which satisfies the estimates
\begin{equation}\label{02}
|||Z|||_1^2\le \gamma^2C_\lambda^2 \quad \text{and} \quad \|Z\|^2_{{\frac{1}{\sigma}}}\le (\tilde C_{HP}+ \max_{[0,1]}\eta)\gamma^2C_\lambda^4,
\end{equation}
where $C_{HP},$  $\tilde C_{HP}$ are the best Hardy-Poincaré constants from Proposition $\ref{propL2}$ and
\begin{equation}\label{Clambda}
C_\lambda:= \begin{cases} \frac{1}{\sqrt{\min_{[0,1]}\eta}}, & \lambda \le0,\\
\frac{1}{\sqrt{\ve \min_{[0,1]}\eta}}, &  \lambda \in \left(0, \frac{1}{C_{HP}}\right),
\end{cases}
\end{equation}
where
$\ve \in (0,1)$ is such that
\begin{equation}\label{laepsi}
\lambda= \frac{1-\ve}{C_{HP}}.
\end{equation}
In addition, if $K_1+ 2K_2 \le 2$, then $Z \in D(A_\lambda)$ and solves
\begin{equation}\label{VP}
\begin{cases}
&-A_\lambda Z=0,\\
&\eta Z’(1) +\beta Z(1)=\gamma.
\end{cases}
\end{equation}
\end{proposition}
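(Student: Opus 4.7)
The plan is to address the three assertions in sequence. For the norm equivalence, Corollary \ref{equivalenze} already gives pairwise equivalence of $\|\cdot\|_{1,\frac{1}{\sigma}}$, $\|\cdot\|_1$, $\|\cdot\|_{1,\bullet}$, $\|\cdot\|_{1,\circ}$, so it suffices to compare $|||\cdot|||_1$ with $\|\cdot\|_{1,\circ}$. For the lower bound I split on the sign of $\lambda$: when $\lambda\le 0$ the singular term only helps and $|||z|||_1^2\ge \int_0^1 \eta(z')^2\,dx \ge (\min_{[0,1]}\eta)\|z\|_{1,\circ}^2$; when $0<\lambda<1/C_{HP}$, I use \eqref{laepsi} to write $\lambda C_{HP}=1-\ve$ and absorb the singular term via \eqref{stima1}, leaving $|||z|||_1^2 \ge \ve\int_0^1 \eta(z')^2\,dx + \beta z^2(1) \ge \ve(\min_{[0,1]}\eta)\|z\|_{1,\circ}^2$. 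The upper bound uses \eqref{stima1} together with $z^2(1)=(\int_0^1 z'\,dx)^2 \le \|z\|_{1,\circ}^2$ (recall $z(0)=0$).

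Existence and uniqueness for \eqref{varionalproblem} then follow from Lax-Milgram applied to the symmetric bilinear form $B(z,\phi):=\int_0^1 \eta z'\phi'\,dx - \lambda\int_0^1 z\phi/(\sigma d)\,dx + \beta z(1)\phi(1)$: it satisfies $B(z,z)=|||z|||_1^2$, and its coercivity and continuity on $H^1_{\frac{1}{\sigma},0}(0,1)$ are exactly the equivalence just established; the functional $\phi\mapsto \gamma\phi(1)$ is continuous because $|\phi(1)|\le\|\phi\|_{1,\circ}$. Testing \eqref{varionalproblem} with $\phi=Z$ produces $|||Z|||_1^2=\gamma Z(1)$; combining this with $|Z(1)|^2\le \|Z\|_{1,\circ}^2 \le |||Z|||_1^2/(\min_{[0,1]}\eta)$ (or $|||Z|||_1^2/(\ve\min_{[0,1]}\eta)$ when $\lambda>0$) gives $|Z(1)|\le C_\lambda |||Z|||_1$ and hence $|||Z|||_1\le|\gamma|C_\lambda$, the first estimate of \eqref{02}. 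For the second, Proposition \ref{propL2} gives $\|Z\|_{\frac{1}{\sigma}}^2\le \tilde C_{HP}\|Z\|_{1,\circ}^2$ while trivially $\|Z\|_{1,\bullet}^2\le (\max_{[0,1]}\eta)\|Z\|_{1,\circ}^2$, so $\|Z\|_{\frac{1}{\sigma}}^2\le \|Z\|_{1,\frac{1}{\sigma}}^2\le (\tilde C_{HP}+\max_{[0,1]}\eta)\|Z\|_{1,\circ}^2$; bounding $\|Z\|_{1,\circ}^2$ by $\gamma^2 C_\lambda^4$ via the coercivity chain just used completes \eqref{02}.

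For the strong reformulation under $K_1+2K_2\le 2$, I would first localize: testing \eqref{varionalproblem} against $\phi\in C_c^\infty(0,1)$ (so $\phi(0)=\phi(1)=0$) gives $\int_0^1 \eta Z'\phi'\,dx=\lambda\int_0^1 Z\phi/(\sigma d)\,dx$, i.e.\ $(\eta Z')'=-\lambda Z/(\sigma d)$ distributionally, which rewrites as $A_\lambda Z=\sigma(\eta Z')'+\lambda Z/d=0$ a.e.\ on $(0,1)$. The condition $K_1+2K_2\le 2$ ensures $Z/d\in L^2_{\frac{1}{\sigma}}(0,1)$, so $D(A_\lambda)=D(A)$ (as noted right after \eqref{Alambda}) and $Z\in D(A_\lambda)$ with $A_\lambda Z\equiv 0$ in $L^2_{\frac{1}{\sigma}}$. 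Returning to \eqref{varionalproblem} with an arbitrary $\phi\in H^1_{\frac{1}{\sigma},0}(0,1)$, Lemma \ref{Lemma2.1} yields $\int_0^1 \eta Z'\phi'\,dx = -\langle AZ,\phi\rangle_{\frac{1}{\sigma}}+\eta(1)Z'(1)\phi(1)$, and the identity $\langle AZ,\phi\rangle_{\frac{1}{\sigma}}=-\lambda\int_0^1 Z\phi/(\sigma d)\,dx$ (coming from $A_\lambda Z=0$) makes the bulk terms cancel, leaving $(\eta(1)Z'(1)+\beta Z(1))\phi(1)=\gamma\phi(1)$; since $\phi(1)$ can be chosen arbitrarily, this gives the boundary condition of \eqref{VP}. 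I expect the main obstacle to be the regularity step $Z\in D(A_\lambda)$: the assumption $K_1+2K_2\le 2$ is precisely what makes the singular right-hand side $\lambda Z/d$ lie in $L^2_{\frac{1}{\sigma}}$, and this is where the careful integrability estimates concentrate.
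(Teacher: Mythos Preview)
Your proposal is correct and follows essentially the same approach as the paper's proof: the same reduction of the norm equivalence to comparing $|||\cdot|||_1$ with $\|\cdot\|_{1,\circ}$ (splitting on the sign of $\lambda$ and using \eqref{laepsi} to absorb the singular term), the same Lax--Milgram argument, the same derivation of the estimates via $|||Z|||_1^2=\gamma Z(1)$ combined with $|Z(1)|\le C_\lambda|||Z|||_1$, and the same two-step recovery of \eqref{VP} (first $\phi\in C_c^\infty(0,1)$ to get the equation a.e., then integration by parts via Lemma~\ref{Lemma2.1} to read off the boundary condition).
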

\begin{proof}To prove the equivalence among the norms, it is sufficient to prove that $|||\cdot|||_1$ and $||\cdot||_{1, \circ}$ are equivalent, thanks to Corollary \ref{equivalenze}.
Now, if $\lambda \le 0$, it is clear that for all $z\in H^1_{\frac{1}{\sigma},0}(0,1)$
\[\|z\|_{1,\circ}^2 \le \frac{1}{\min_{[0,1]} \eta}|||z|||^2_1.
\]
If $\lambda \in \left(0, \ds \frac{1}{C_{HP}}\right)$, we start fixing $\ve$ as indicated in \eqref{laepsi}. Hence, for all $z\in H^1_{\frac{1}{\sigma},0}(0,1)$, one has
\begin{equation}\label{startondino}
\int_0^1\eta (z')^2 dx -\lambda \int_0^1 \frac{z^2}{\sigma d} dx \ge \ve \int_0^1 \eta (z')^2dx
\end{equation}
and so
\[
\|z\|_{1,\circ}^2 \le \frac{1}{\ve \min_{[0,1]}\eta}|||z|||^2_1,
\]
for all  $z\in H^1_{\frac{1}{\sigma},0}(0,1)$. In conclusion, we have
\begin{equation}\label{starnew}
\|z\|_{1, \circ} \le C_\lambda |||z|||_1,
\end{equation}
where $C_\lambda$ is as in \eqref{Clambda}.
Now, we prove that there exists $C>0$ such that
\[|||z|||^2_1 \le C\|z\|_{1,\circ}^2,
\]
for all  $z\in H^1_{\frac{1}{\sigma},0}(0,1)$.  
First of all, observe that
\begin{equation}\label{*z}
|z(1)|= \left| \int_0^1 z'(t)dt \right| \le \|z\|_{1, \circ},
\end{equation} 
for all  $z\in H^1_{\frac{1}{\sigma},0}(0,1)$. Now let $\lambda> 0$. Clearly, if $z\in H^1_{\frac{1}{\sigma},0}(0,1)$,  from \eqref{*z}, we get
\[
|||z|||^2_1 \le (\max_{[0,1]}\eta + \beta)\|z\|_{1,\circ}^2.
\]
On the other hand, if  $\lambda <0$ and $z\in H^1_{\frac{1}{\sigma},0}(0,1)$, by \eqref{*z} we get
\[
|||z|||^2_1 \le (1-\lambda C_{HP}) \max_{[0,1]}\eta \int_0^1(z')^2dx + \beta z^2(1) \le  ((1-\lambda C_{HP})\max_{[0,1]}\eta + \beta)\|z\|_{1,\circ}^2,
\]
Thus, the two norms are equivalent.

Now, consider the bilinear and symmetric form  $\Lambda: H^1_{\frac{1}{\sigma},0}(0,1) \times H^1_{\frac{1}{\sigma},0}(0,1) \rightarrow \R$, given by
\[
\Lambda (z, \phi) := \int_0^1 \eta z' \phi' dx - \lambda \int_0^1 \frac{z\varphi}{\sigma d}dx+ \beta z(1)\phi(1).
\]
for all  $z,\phi \in H^1_{\frac{1}{\sigma},0}(0,1)$.
Clearly $\Lambda$ is coercive and continuous. Indeed, by Corollary \ref{equivalenze}
\begin{equation}\label{02bis}
\begin{aligned}
\Lambda(z,z) &= \int_0^1 \eta (z')^2dx - \lambda \int_0^1 \frac{z^2}{\sigma d}dx + \beta z^2(1)\\
&\ge  \int_0^1 \eta (z')^2dx- \lambda \int_0^1 \frac{z^2}{\sigma d}dx  \ge  \|z\|_{1,\bullet}^2
\end{aligned}
\end{equation}
if $\lambda \le 0$, and
\begin{equation}\label{02bis1}
\begin{aligned}
\Lambda(z,z) \ge  \left(1- \lambda C_{HP}\right)  \|z\|_{1,\bullet}^2,
\end{aligned}
\end{equation}
if $\lambda \in \left(0, \frac{1}{C_{HP}}\right)$.
Moreover, by the H\"older inequality and \eqref{*z},
\[
\begin{aligned}
|\Lambda (z, \phi)| &\le  \int_0^1 \eta |z' \phi'  |dx - \lambda \int_0^1 \left|\frac{z\phi }{\sigma d}\right|dx + \beta |z(1)||\phi(1)|\\
&\le ((1-\lambda C_{HP})\max_{x \in [0,1]}\eta + \beta)\|z\|_{1,\circ} \|\phi\|_{1,\circ}
\end{aligned}
\]
if $\lambda \le 0$, and
\[
|\Lambda (z, \phi)| \le (2\max_{x \in [0,1]}\eta + \beta)\|z\|_{1,\circ} \|\phi\|_{1,\circ}
\]
if $\lambda \in \left(0, \frac{1}{C_{HP}}\right)$.

Now, consider the linear functional $\mathcal L:H^1_{\frac{1}{\sigma},0}(0,1)\to \R$ defined as
\[
\mathcal L( \phi):= \gamma \phi(1),
\]
for every $\phi \in H^1_{\frac{1}{\sigma},0}(0,1)$. Clearly, $\mathcal L$ is continuous and linear. Thus, by the Lax-Milgram Theorem, there exists a unique solution $Z\in H^1_{\frac{1}{\sigma},0}(0,1)$ of
\begin{equation}\label{05}
\Lambda (Z, \phi)= \mathcal L (\phi)
\end{equation}
for all $\phi \in H^1_{\frac{1}{\sigma},0}(0,1)$.
In particular,
\begin{equation}\label{04}
|||Z|||_1^2=\Lambda (Z,Z) =\int_0^1 \eta (Z’)^2dx - \lambda \int_0^1 \frac{Z^2}{\sigma d}dx + \beta Z^2(1) =  \mathcal L(Z)= \gamma Z(1).
\end{equation}
By \eqref{starnew} and \eqref{*z},  we have that for all $z\in H^1_{\frac{1}{\sigma},0}(0,1)$
\begin{equation}\label{disclambda}
|z(1)| \le \|z\|_{1, \circ} \le C_\lambda |||z|||_1
\end{equation}
and, by \eqref{04},
\[
|||Z|||_1^2 = \gamma Z(1) \le\ds |\gamma|C_\lambda |||Z|||_1;\]
thus
\[
|||Z|||_1\le |\gamma|C_\lambda,\]
which proves the first part of \eqref{02}.
Moreover, by Proposition \ref{propL2}, \eqref{starnew} and \eqref{disclambda}, we have
\[
\begin{aligned}
\|Z\|^2_{{\frac{1}{\sigma}}}&\le \|Z\|^2_{1, \frac{1}{\sigma}}\le (\tilde C_{HP}+ \max_{[0,1]}\eta) \|Z\|^2_{1,\circ} \le (\tilde C_{HP}+ \max_{[0,1]}\eta)C_\lambda^2 |||Z|||^2_{1} \\
&\le (\tilde C_{HP}+ \max_{[0,1]}\eta)\gamma^2C_\lambda^4,
\end{aligned}
\]
thus concluding the validity of \eqref{02}.

Now, assume $K_1+ 2K_2\le 2$. We will prove that $Z \in H^2_{\frac{1}{\sigma},0}(0,1)$ and solves \eqref{VP}. To this aim, we consider again \eqref{05}. Since it holds for every $\phi \in H^1_{\frac{1}{\sigma},0}(0,1)$, it holds in particular for every $\phi \in C_c^\infty(0,1)$, so that 
\[
\int_0^1 \eta Z’\phi' - \lambda \int_0^1 \frac{Z\phi}{\sigma d}=0   \mbox{ for all }\phi \in C_c^\infty(0,1)\Longleftrightarrow  \int_0^1 \eta Z'\phi' =\lambda \int_0^1 \frac{Z\phi}{\sigma d}  \mbox{ for all }\phi \in C_c^\infty(0,1).
\]
Hence,
\[
( \eta Z’)’= - \lambda \frac{Z}{\sigma d} \quad \text{ a.e.  in} \; (0,1) \Longleftrightarrow -\sigma (\eta Z’)’ - \lambda \frac{Z}{d}=0 
\quad \text{ a.e.  in} \; (0,1)
\]
and so
$A_\lambda Z\in L^2_{\frac{1}{\sigma}}(0,1)$. Thanks to the assumption $K_1+ 2K_2\le 2$, one has that $\ds\frac{Z}{d} \in L^2_{\frac{1}{\sigma}}(0,1)$. Hence $A Z\in L^2_{\frac{1}{\sigma}}(0,1)$ and so $Z \in D(A_\lambda)$.

Finally, coming back to \eqref{05}, we have by \eqref{conal}
\[
\int_0^1 \eta Z’\phi' dx -\lambda \int_0^1 \frac{Z\varphi}{\sigma d} dx+ \beta Z(1)\phi(1)=\gamma \phi(1) \Longleftrightarrow (\eta Z'\phi)(1)+ \beta Z(1)\phi(1) =\gamma\phi(1)
\]
for all $\phi \in H^1_{\frac{1}{\sigma},0}(0,1)$. Thus, we obtain
\[
(\eta Z’)(1) + \beta Z(1)=\gamma;
\]
hence $Z$ solves \eqref{VP}.
\end{proof}

\begin{remark}
The previous result is similar to the one given in \cite[Proposition 2.2]{fm}, but here the presence of the singular term makes the situation more involved, and requires the condition $K_1+ 2K_2\le 2$. 
\end{remark}
\section{Stability result}\label{sec3}

In this section we prove the main result of this paper when $a$ and $d$ are (WD) or (SD). To this aim, if
 $y$ is a mild solution of \eqref{mainequation}, we consider its energy given by
$$
E_y(t)=\frac{1}{2}\left[\int_0^1\left(\frac{1}{\sigma} y_t^2(t, x)+\eta y_x^2(t, x) -\frac{\lambda}{\sigma d}  y^2(t, x) \right) d x+\beta y^2(t, 1)\right], \quad t \geq 0,
$$
and we prove that it decreases exponentially under suitable assumptions. In particular,
we collect all the previous assumptions in the following one:
\begin{hypothesis}\label{hyp3} Assume  that 
\begin{enumerate}
\item the functions $a$ and $d$ are (WD) or (SD), 
\item
$b \in C^0[0,1]$ is such that 
$\ds \frac{b}{a} \in L^1(0,1)$, 
\item $K_a+2K_d\le 2$,
\item if $K_a>1$, then $\ds \frac{x b}{a} \in L^{\infty}(0,1)$
\item $\lambda < \frac{1}{C_{HP}}.$
\end{enumerate}
\end{hypothesis}
We remark that the condition $\frac{xb}{a}\in L^\infty(0,1)$ is automatically verified if $K_a\leq1$ by \eqref{boundxgamba}. Moreover, we recall that the condition $K_a+2K_d\le 2$ (which specifies the previous $K_1+2K_2\leq 2$) implies that $d$ cannot be (SD). 
Under the previous hypotheses, one can prove the next result

\begin{thm}\label{energiadecrescente}
Assume Hypothesis $\ref{hyp3}$ and let $y$ be a classical solution of
\eqref{mainequation}. Then the energy is nonincreasing and
$$
\frac{d E_y(t)}{d t}=-y_t(t, 1)^2, \quad t \geq 0.
$$
\end{thm}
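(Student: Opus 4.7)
The plan is a direct energy computation: differentiate $E_y(t)$ under the integral sign, substitute the PDE to eliminate $y_{tt}$, integrate by parts spatially, and handle the boundary contributions using the damping condition at $x=1$ and Lemma \ref{lemmalimits} at $x=0$. Since $y$ is classical, we have $y\in C^2([0,+\infty);L^2_{1/\sigma}(0,1))\cap C^1([0,+\infty);H^1_{1/\sigma,0}(0,1))\cap C([0,+\infty);D(A_\lambda))$, so differentiation under the integral is legitimate.

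First I would compute
\[
\frac{dE_y}{dt} = \int_0^1\left(\frac{1}{\sigma}y_t y_{tt} + \eta y_x y_{xt} - \frac{\lambda}{\sigma d} y y_t\right)dx + \beta y(t,1) y_t(t,1).
\]
Using the identity $Ay=\sigma(\eta y_x)_x$ from \eqref{defsigma}, the equation in \eqref{mainequation} can be written as
\[
\frac{1}{\sigma} y_{tt} = (\eta y_x)_x + \frac{\lambda}{\sigma d}y,
\]
which, after multiplying by $y_t$, shows that the $\lambda/(\sigma d)$ contributions cancel exactly. What remains is
\[
\frac{dE_y}{dt} = \int_0^1\Bigl(y_t(\eta y_x)_x + \eta y_x y_{xt}\Bigr)dx + \beta y(t,1) y_t(t,1) = \int_0^1 \bigl(\eta y_x y_t\bigr)_x dx + \beta y(t,1) y_t(t,1).
\]

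Next I would evaluate the boundary terms produced by the spatial integration by parts. At $x=0$, since $y(t,0)=0$ for all $t\geq 0$ we also have $y_t(t,0)=0$; more importantly, $y(t,\cdot)\in D(A_\lambda)\subseteq D(A)$ and $y_t(t,\cdot)\in H^1_{1/\sigma,0}(0,1)$, so Lemma \ref{lemmalimits}(1) yields $\lim_{x\to 0^+} y_t(t,x) y_x(t,x) = 0$; since $\eta\in C^0[0,1]$, the contribution at $0$ vanishes. At $x=1$, I would use the boundary damping $y_t(t,1) + \eta y_x(t,1)+\beta y(t,1)=0$, i.e.\ $\eta y_x(t,1) = -y_t(t,1)-\beta y(t,1)$, to compute
\[
[\eta y_x y_t](t,1) + \beta y(t,1) y_t(t,1) = \bigl(-y_t(t,1)-\beta y(t,1)\bigr) y_t(t,1) + \beta y(t,1) y_t(t,1) = -y_t(t,1)^2.
\]

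Putting everything together gives $\dfrac{dE_y}{dt} = -y_t(t,1)^2 \leq 0$, which is the claim. The only non-routine point is the vanishing at $x=0$: the degeneracy of $a$ (and hence the possible unboundedness of $1/\sigma$) prevents a naive trace argument, and this is precisely where Lemma \ref{lemmalimits}(1) (itself relying on Hypothesis \ref{hyp3}, including $K_a+2K_d\leq 2$ and the $L^\infty$ condition on $xb/a$ when $K_a>1$) is invoked; everything else is a straightforward classical energy identity.
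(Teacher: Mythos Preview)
Your proof is correct and follows essentially the same energy computation as the paper: multiply the equation by $y_t/\sigma$ (equivalently, differentiate $E_y$), integrate by parts, and use the boundary damping at $x=1$ together with the vanishing of $\eta y_x y_t$ at $x=0$. One minor over-attribution: Lemma~\ref{lemmalimits}(1) only requires Hypothesis~\ref{hyp1}, not the additional conditions $K_a+2K_d\le 2$ or $xb/a\in L^\infty(0,1)$, so your closing remark slightly overstates what is needed for the $x=0$ limit; otherwise the argument matches the paper's.
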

\begin{proof}
By multiplying the equation by $\ds \frac{y_t}{\sigma}$, integrating over $(0,1)$ and using the boundary conditions, one has
$$ 
0  =\frac{1}{2} \int_0^1 \frac{d}{d t}\left(\frac{y_t^2}{\sigma}\right) d x- \eta(1) y_x(t,1) y_t(t,1)+\int_0^1 \eta y_x y_{t x} d x -\int_0^1 \frac{\lambda}{\sigma d}  yy_t \, dx.
$$
Thus
$$ 
\begin{aligned}
0 & =\frac{1}{2} \frac{d}{d t}\left[\int_0^1\left(\frac{y_t^2}{\sigma}+\eta y_x^2 -\frac{\lambda}{\sigma d}  y^2 \right) d x+\beta y^2(t, 1)\right]+y_t^2(t, 1) \\
& = \frac{d}{d t} E_y(t)+y_t^2(t, 1) .
\end{aligned}
$$
Hence, the conclusion follows.
\end{proof}

Now, our aim is to estimate the energy at time $t>0$, $E_y(t)$, in terms of the energy at time $t=0$,  $E_y(0)$. To do that, we start with the following
\begin{proposition}
Assume Hypothesis $\ref{hyp3}$ and let $y$ be a classical solution of \eqref{mainequation}. Then, for all $T>s>0$
\begin{equation}\label{uguaglianza}
\begin{aligned}
0 & =2 \int_0^1\left[\frac{x y_x y_t}{\sigma}\right]_{t=s}^{t=T} d x-\frac{1}{\sigma(1)} \int_s^T y_t^2(t, 1) d t-\eta(1) \int_s^T y_x^2(t, 1) d t - \frac{\lambda}{\sigma(1)d(1)}\int_0^Ty^2(t,1)dt\\
&-\int_{Q_s} x \eta \frac{b}{a} y_x^2 d x d t +\int_{Q_s}\left(1-\frac{x\left(a^{\prime}-b\right)}{a}\right) \frac{1}{\sigma} y_t^2 d x d t+\int_{Q_s} \eta y_x^2 d x d t \\
&+ \lambda \int_{Q_s}\left( 1-\frac{x(a^\prime-b)}{a}-\frac{x d^\prime}{d} \right) \frac{y^2}{\sigma d}dxdt,
\end{aligned}
\end{equation}
where $Q_s:=(s, T) \times(0,1)$.
\end{proposition}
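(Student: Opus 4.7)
The strategy is to apply the multiplier method with the weighted multiplier $\dfrac{2xy_x}{\sigma}$. Concretely, I multiply the equation of \eqref{mainequation} by $\dfrac{2xy_x}{\sigma}$ and integrate over $Q_s=(s,T)\times(0,1)$; the identity \eqref{uguaglianza} will emerge as the sum of four contributions, one per term of the PDE, after careful integration by parts in both $t$ and $x$. The algebraic identities driving the computation are $\dfrac{a}{\sigma}=\eta$, $\dfrac{b}{\sigma}=\eta\dfrac{b}{a}$, and, using $\eta'=\eta\dfrac{b}{a}$, $\dfrac{\sigma'}{\sigma}=\dfrac{a'-b}{a}$, whence $\left(\dfrac{x}{\sigma}\right)_x=\dfrac{1}{\sigma}\left(1-\dfrac{x(a'-b)}{a}\right)$ and $\left(\dfrac{x}{\sigma d}\right)_x=\dfrac{1}{\sigma d}\left(1-\dfrac{x(a'-b)}{a}-\dfrac{xd'}{d}\right)$.

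For the first contribution, integration by parts in $t$ transforms $\int_{Q_s}y_{tt}\dfrac{2xy_x}{\sigma}\,dx\,dt$ into the $t$-boundary term $2\int_0^1\left[\dfrac{xy_xy_t}{\sigma}\right]_{t=s}^{t=T}dx$ minus $\int_{Q_s}\dfrac{x}{\sigma}(y_t^2)_x\,dx\,dt$; a further integration by parts in $x$ on the latter yields the boundary contribution $-\dfrac{1}{\sigma(1)}\int_s^T y_t^2(t,1)\,dt$ together with the interior integral $\int_{Q_s}\left(1-\dfrac{x(a'-b)}{a}\right)\dfrac{y_t^2}{\sigma}\,dx\,dt$. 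For the leading spatial term, $-ay_{xx}\cdot\dfrac{2xy_x}{\sigma}=-x\eta(y_x^2)_x$, so integration by parts in $x$ gives $-\eta(1)\int_s^T y_x^2(t,1)\,dt+\int_{Q_s}\eta y_x^2\,dx\,dt+\int_{Q_s}x\eta\dfrac{b}{a}y_x^2\,dx\,dt$; adding the drift contribution $-by_x\cdot\dfrac{2xy_x}{\sigma}=-2x\eta\dfrac{b}{a}y_x^2$ flips the sign of the last interior term into the desired $-\int_{Q_s}x\eta\dfrac{b}{a}y_x^2\,dx\,dt$. Finally, the singular term produces $-\lambda\int_{Q_s}\dfrac{x}{\sigma d}(y^2)_x\,dx\,dt$, which after one integration by parts in $x$ delivers the boundary term $-\dfrac{\lambda}{\sigma(1)d(1)}\int_s^T y^2(t,1)\,dt$ and the last interior integral of \eqref{uguaglianza}.

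The main obstacle is to legitimize the vanishing of all boundary terms at $x=0$ that arise from the $x$-integrations by parts. Three limits are required: $\lim_{x\to 0}\dfrac{x}{\sigma}y_t^2=0$, $\lim_{x\to 0}x\eta\, y_x^2=0$, and $\lim_{x\to 0}\dfrac{x}{\sigma d}y^2=0$. Since $\eta\in C^0[0,1]$, these reduce to $\dfrac{x}{a}y_t^2$, $xy_x^2$ and $\dfrac{x}{ad}y^2$ tending to $0$. For a classical solution I have $y_t(t,\cdot)\in H^1_{\frac{1}{\sigma},0}(0,1)$ and $y(t,\cdot)\in D(A_\lambda)\subseteq D(A)$, so item (2) of Lemma \ref{lemmalimits} handles the $y_t$ limit, and items (6)--(7) together handle $xy_x^2\to 0$: the case $K_a\le 1$ is covered by (6), while the case $K_a>1$ invokes (7), whose hypothesis $\dfrac{xb}{a}\in L^\infty(0,1)$ is exactly Hypothesis \ref{hyp3}(4). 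The third limit is supplied by item (3), which applies because the assumption $K_a+2K_d\le 2$ together with $K_d<1$ (forced by Hypothesis \ref{hyp3}) yields $K_a+K_d<2$. Once these boundary contributions are shown to vanish, summing the four computed pieces and equating with $0$ yields \eqref{uguaglianza}.
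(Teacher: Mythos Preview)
Your proof is correct and follows essentially the same multiplier method as the paper: multiply by $\dfrac{xy_x}{\sigma}$ (you absorb the factor $2$ from the outset), integrate by parts in $t$ and $x$ using the identities $\sigma'=\sigma\,\dfrac{a'-b}{a}$ and $(x\eta)'=\eta+x\eta\dfrac{b}{a}$, and kill the $x=0$ boundary contributions via Lemma~\ref{lemmalimits}. Your justification of the three vanishing limits is in fact more explicit than the paper's, which simply appeals globally to ``the boundary conditions and Lemma~\ref{lemmalimits}''; in particular, your observation that $K_a+2K_d\le 2$ with $K_d>0$ forces $K_a+K_d<2$ (so item~(3) applies) is exactly the right reason.
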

\begin{proof}
Take any $T>s>0$; then, multiplying the equation of \eqref{mainequation} by $\ds\frac{x y_x}{\sigma}$, integrating over $Q_s$ and recalling \eqref{defsigma}, we have, after some integration by parts,
$$
\begin{aligned}
0 & =\int_{Q_s} \frac{y_{t t} x y_x}{\sigma} d x d t-\int_{Q_s} x\left(\eta y_x\right)_x y_x d x d t - \lambda\int_{Q_s}\frac{x y y_x}{\sigma d} dxdt\\
& =\int_0^1\left[\frac{x y_x y_t}{\sigma}\right]_{t=s}^{t=T} d x-\frac{1}{2} \int_s^T\left[\frac{x}{\sigma} y_t^2\right]_{x=0}^{x=1} d t+\frac{1}{2} \int_{Q_s}\left(\frac{x}{\sigma}\right)^{\prime} y_t^2 d x d t-\int_{Q_s} x \eta \frac{b}{a} y_x^2 d x d t \\
& -\frac{1}{2} \int_s^T\left[x \eta y_x^2\right]_{x=0}^{x=1} d t+\frac{1}{2} \int_{Q_s}(x \eta)^{\prime} y_x^2 d x d t - \frac{ \lambda}{2}\int_s^T\left[\frac{xy^2}{\sigma d}\right]_{x=0}^{x=1} dt \\
&+ \frac{ \lambda}{2}\int_{Q_s}\left(\frac{\sigma d -x(\sigma^\prime d + \sigma d^\prime)}{(\sigma d)^2}\right) y^2dxdt.
\end{aligned}
$$
Recalling the definitions of $\eta$ and $\sigma$, we finally find
\begin{align}\label{ugual'}
0 & =\int_0^1\left[\frac{x y_x y_t}{\sigma}\right]_{t=s}^{t=T} d x-\frac{1}{2} \int_s^T\left[\frac{x}{\sigma} y_t^2\right]_{x=0}^{x=1} d t-\frac{1}{2} \int_s^T\left[x \eta y_x^2\right]_{x=0}^{x=1} d t - \frac{ \lambda}{2}\int_s^T\left[\frac{xy^2}{\sigma d}\right]_{x=0}^{x=1} dt \notag\\
& -\frac{1}{2} \int_{Q_s} x \eta \frac{b}{a} y_x^2 d x d t+\frac{1}{2} \int_{Q_s}\left(1-\frac{x\left(a^{\prime}-b\right)}{a}\right) \frac{1}{\sigma} y_t^2 d x d t+\frac{1}{2} \int_{Q_s} \eta y_x^2 d x d t \notag \\
&+ \frac{ \lambda}{2}\int_{Q_s}\left( 1-\frac{x(a^\prime-b)}{a}-\frac{x d^\prime}{d} \right) \frac{y^2}{\sigma d}dxdt.
\end{align}

Moreover, using the boundary conditions and Lemma \ref{lemmalimits}, we have that
$$
\lim _{x \rightarrow 0} \frac{x}{\sigma} y_t^2(t, x)=\lim _{x \rightarrow 0} \frac{x}{a} \eta y_t^2(t, x)=0,
$$
$$
\lim _{x \rightarrow 0} x \eta y_x^2(t, x)=0 
$$
and
$$
 \lambda \int_0^T\left[y^2 \frac{x}{\sigma  d}\right]_{x=0}^{x=1}=\frac{\lambda}{\sigma(1)d(1)}\int_0^T y^2(t,1)dt.
$$
Hence, \eqref{ugual'} multiplied by $2$ gives \eqref{uguaglianza}.
\end{proof}

\begin{proposition}
Assume Hypothesis $\ref{hyp3}$ and let $y$ be a classical solution of \eqref{mainequation}. Then, for all $T>s>0$ we have
\begin{align}\label{DT}
(B.T.) &= \int_{Q_s}\left(1-\frac{x\left(a^{\prime}-b\right)}{a}+\frac{K_a}{2}\right) \frac{1}{\sigma} y_t^2 d x d t+\int_{Q_s}\left(1-x \frac{b}{a}-\frac{K_a}{2}\right) \eta y_x^2 d x d t \\
& + \lambda \int_{Q_s}\left(1-\frac{x\left(a^{\prime}-b\right)}{a}-\frac{xd^\prime}{d}+\frac{K_a}{2}\right) \frac{y^2}{\sigma d} d x d t\nonumber
\end{align}
where
\begin{align}\label{BT}
(B.T.)=\int_0^1\left[-2 \frac{x y_x y_t}{\sigma}+\frac{K_a}{2} \frac{y y_t}{\sigma}\right]_{t=s}^{t=T} d x+\int_s^T\left[\frac{1}{\sigma} y_t^2+\eta y_x^2-\frac{K_a}{2} \eta y_x y+ \frac{\lambda}{\sigma d}y^2 \right](t, 1) d t.
\end{align} 
\end{proposition}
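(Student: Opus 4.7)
The plan is to derive a second multiplier identity, this time by testing the equation against $\tfrac{K_a}{2}\tfrac{y}{\sigma}$, and then to combine it linearly with \eqref{uguaglianza}. All coefficients of the $K_a/2$ corrections in \eqref{DT} and \eqref{BT} can be matched by a single subtraction, since the new multiplier contributes precisely one copy of each missing term (with the correct sign).

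First, I would multiply the equation of \eqref{mainequation} by $\tfrac{K_a}{2}\tfrac{y}{\sigma}$ and integrate over $Q_s$, using $\tfrac{Ay}{\sigma}=(\eta y_x)_x$ from \eqref{defsigma}. Integrating by parts in $t$ in the $y_{tt}$ term gives
\[
\frac{K_a}{2}\int_{Q_s}\frac{y\,y_{tt}}{\sigma}\,dx\,dt
=\int_0^1\!\left[\frac{K_a}{2}\,\frac{y y_t}{\sigma}\right]_{t=s}^{t=T}\!dx
-\frac{K_a}{2}\int_{Q_s}\frac{y_t^2}{\sigma}\,dx\,dt.
\]
Integrating by parts in $x$ in the spatial term yields
\[
\frac{K_a}{2}\int_{Q_s} y(\eta y_x)_x\,dx\,dt
=\frac{K_a}{2}\int_s^T\!\eta(1)\,y(t,1)\,y_x(t,1)\,dt
-\frac{K_a}{2}\int_{Q_s}\eta\,y_x^2\,dx\,dt,
\]
where the boundary contribution at $x=0$ vanishes because $\eta$ is bounded on $[0,1]$ and Lemma~\ref{lemmalimits}(1), applied to $y(t,\cdot)\in D(A_\lambda)\subseteq D(A)$ and $u=y(t,\cdot)\in H^1_{\frac{1}{\sigma},0}(0,1)$, gives $\lim_{x\to 0}y(t,x)y_x(t,x)=0$.

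Collecting these computations, the new multiplier identity reads
\begin{align*}
0=\int_0^1\!\left[\frac{K_a}{2}\frac{y y_t}{\sigma}\right]_{s}^{T}\!dx
&-\frac{K_a}{2}\int_{Q_s}\frac{y_t^2}{\sigma}\,dx\,dt
-\frac{K_a}{2}\int_s^T\eta(1)\,y(t,1)y_x(t,1)\,dt\\
&+\frac{K_a}{2}\int_{Q_s}\eta\,y_x^2\,dx\,dt
-\frac{\lambda K_a}{2}\int_{Q_s}\frac{y^2}{\sigma d}\,dx\,dt.
\end{align*}
Subtracting this from \eqref{uguaglianza}, the $y_t^2/\sigma$ coefficient becomes $1-\tfrac{x(a'-b)}{a}+\tfrac{K_a}{2}$, the $\eta y_x^2$ coefficient becomes $1-x\tfrac{b}{a}-\tfrac{K_a}{2}$, and the $\lambda y^2/(\sigma d)$ coefficient becomes $1-\tfrac{x(a'-b)}{a}-\tfrac{xd'}{d}+\tfrac{K_a}{2}$, exactly as in \eqref{DT}. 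The time-slice boundary terms at $t=s,T$ pick up the extra $+\tfrac{K_a}{2}\tfrac{yy_t}{\sigma}$, and the $x=1$ boundary integrals receive $-\tfrac{K_a}{2}\eta y_x y$ (from the sign of the subtraction), producing precisely \eqref{BT}.

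Finally I would rearrange to place all boundary contributions on the left-hand side (defining $(B.T.)$) and the three sign-definite-looking interior integrals on the right, yielding \eqref{DT}. The only delicate point is step~3: the justification of the vanishing of $y\eta y_x$ at $x=0$, which is handled by Lemma~\ref{lemmalimits}(1) since $y$ is a classical solution; the rest is bookkeeping on coefficients.
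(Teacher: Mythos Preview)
Your proof is correct and follows the same approach as the paper: multiply the equation by $\tfrac{y}{\sigma}$ (you include the factor $\tfrac{K_a}{2}$ from the outset, the paper inserts it afterwards with a sign), integrate by parts, use Lemma~\ref{lemmalimits}(1) to kill the $x=0$ boundary contribution, and combine linearly with \eqref{uguaglianza}. The only cosmetic difference is that the paper phrases the combination as ``multiply \eqref{ugual''} by $-\tfrac{K_a}{2}$ and add to \eqref{uguaglianza}'' whereas you phrase it as a subtraction; the bookkeeping is identical.
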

\begin{proof}
Multiplying the equation in \eqref{mainequation} by $\ds \frac{y}{\sigma}$ and integrating over $Q_s$, we have
\begin{align}\label{ugual''}
0=\int_{Q_s}\left(-\frac{y_t^2}{\sigma}+\eta y_x^2 -\lambda \frac{y^2}{\sigma d} \right) d x d t+\int_0^1\left[\frac{y y_t}{\sigma}\right]_{t=s}^{t=T} d x-\int_s^T\left[\eta y_x y\right]_{x=0}^{x=1} d t.
\end{align}
Using the boundary conditions and the fact that $y$ is a classical solution of \eqref{mainequation}, one has
$$
\int_s^T\left[\eta y_x y\right]_{x=0}^{x=1} d t=\int_s^T\left[\eta y_x y\right](t, 1) d t;
$$
thus, multiplying \eqref{ugual''} by $-\ds\frac{K_a}{2}$, we obtain
\begin{align}\label{ugual'''}
0=-\frac{K_a}{2} \int_{Q_s}\left(-\frac{y_t^2}{\sigma}+\eta y_x^2-\lambda \frac{y^2}{\sigma d} \right) d x d t-\frac{K_a}{2} \int_0^1\left[\frac{y y_t}{\sigma}\right]_{t=s}^{t=T} d x+\frac{K_a}{2} \int_s^T\left[\eta y_x y\right](t, 1) d t.
\end{align}
Summing \eqref{ugual'''} and \eqref{uguaglianza}, we get the claim.
\end{proof}

Now, define
\begin{equation}\label{1epsi}
\boldsymbol{1_\ve} := \begin{cases}1, & \lambda \le 0,\\
\ve, & \lambda=\frac{1-\ve}{C_{HP}}.
\end{cases}
\end{equation}
The following result holds:
\begin{proposition}\label{Prop3}
 Assume Hypothesis $\ref{hyp3}$, $\beta\ge 0$ and let $y$ be a classical solution of \eqref{mainequation}. Then, there exists $\1e \in(0,1]$, such that  for any $T>s>0$ and $\delta>0$ we have
\begin{equation}\label{Stimabo}
\begin{aligned}
\int_s^Ty^2(t,1)dt  &\le \left(\frac{2}{\boldsymbol{1_\ve} } \left(1+ \frac{\tilde C_{HP}C_\lambda^2}{\boldsymbol{1_\ve}\min_{[0,1]}^2\eta}\right)+ \frac{1}{2\delta} +\frac{1}{2\delta}(\tilde C_{HP} + \max_{[0,1]}\eta ) C_\lambda^4\right) E_y(s)\\
&+  \frac{\delta}{\boldsymbol{1_\ve}}\left(1+  \frac{1}{\boldsymbol{1_\ve}\min_{[0,1]}^2 \eta}C_\lambda^2 \right)\int_s^T E_y(t) dt,\end{aligned}
\end{equation}
where $C_\lambda$ is as in \eqref{Clambda}.
\end{proposition}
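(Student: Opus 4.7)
The plan is to use Proposition~\ref{Prop2.2} as a ``dual'' elliptic multiplier. For each $t\in[s,T]$, with the choice $\gamma:=y(t,1)$, Hypothesis~\ref{hyp3} (in particular $K_a+2K_d\le 2$) gives a unique $Z(t,\cdot)\in D(A_\lambda)$ with $A_\lambda Z(t,\cdot)=0$ and $\eta(1) Z_x(t,1)+\beta Z(t,1)=y(t,1)$. By the linearity of \eqref{varionalproblem} in $\gamma$, $Z(t,x)=y(t,1)\tilde Z(x)$, where the time-independent profile $\tilde Z$ corresponds to $\gamma=1$; in particular $Z_t=y_t(t,1)\tilde Z$, and \eqref{02}, \eqref{disclambda} specialise to $|||\tilde Z|||_1^2\le C_\lambda^2$, $\|\tilde Z\|_{\frac{1}{\sigma}}^2\le(\tilde C_{HP}+\max_{[0,1]}\eta)C_\lambda^4$, $|\tilde Z(1)|\le C_\lambda^2$. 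Moreover, combining \eqref{stima1new} with the $|||\cdot|||_1$-bound on $\tilde Z$, one has the sharper estimate $\|\tilde Z\|_{\frac{1}{\sigma}}^2\le \tilde C_{HP}C_\lambda^4$.

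Multiplying the equation of \eqref{mainequation} by $Z/\sigma$, integrating over $Q_s$, integrating the $y_{tt}$-piece once by parts in $t$, and using \eqref{conal} on the $A_\lambda y$-piece, one produces the boundary term $\eta(1) y_x(t,1) Z(t,1)$. Applying the variational identity \eqref{varionalproblem} for $Z$ with the admissible test function $\phi:=y(t,\cdot)\in H^1_{\frac{1}{\sigma},0}(0,1)$ yields
\[
\int_0^1 \eta y_x Z_x\,dx-\lambda\int_0^1\frac{yZ}{\sigma d}\,dx=y^2(t,1)-\beta Z(t,1)y(t,1),
\]
and the damping condition $\eta(1)y_x(t,1)=-y_t(t,1)-\beta y(t,1)$ cancels the two $\beta Z(t,1) y(t,1)$ contributions, leaving the key identity
\[
\int_s^T y^2(t,1)\,dt=-\Bigl[\int_0^1\frac{y_t Z}{\sigma}dx\Bigr]_s^T+\int_{Q_s}\frac{y_t Z_t}{\sigma}\,dx\,dt-\int_s^T y_t(t,1)Z(t,1)\,dt.
\]

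The three right-hand terms are then estimated via Young's inequality using that $E_y$ is non-increasing and $\int_s^T y_t^2(t,1)\,dt\le E_y(s)$ (both from Theorem~\ref{energiadecrescente}), together with the pointwise controls $\|y_t(t,\cdot)\|_{\frac{1}{\sigma}}^2\le 2E_y(t)$ and $y^2(t,1)\le 2C_\lambda^2 E_y(t)=\frac{2E_y(t)}{\boldsymbol{1_\ve}\min_{[0,1]}\eta}$ (the latter via \eqref{*z}, \eqref{starnew} and $|||y(t,\cdot)|||_1^2\le 2E_y(t)$). For the boundary-in-time piece, after substituting $Z=y(t,1)\tilde Z$, Young's inequality applied to $|y(t,1)|\cdot|\int_0^1 y_t\tilde Z/\sigma|$ with weight $1/\min_{[0,1]}\eta$ is tuned so that $\tfrac{\min\eta}{2}y^2(t,1)\le E_y(t)/\boldsymbol{1_\ve}$, and the sharper bound $\|\tilde Z\|_{\frac{1}{\sigma}}^2\le \tilde C_{HP}C_\lambda^4$ handles the companion summand, reproducing exactly $\frac{2}{\boldsymbol{1_\ve}}\bigl(1+\frac{\tilde C_{HP}C_\lambda^2}{\boldsymbol{1_\ve}\min^2\eta}\bigr)E_y(s)$. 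For the volume piece $\int_s^T y_t(t,1)\langle y_t,\tilde Z\rangle_{\frac{1}{\sigma}}\,dt$, Young's with parameter $\delta$, combined with $\int_s^T y_t^2(t,1)\,dt\le E_y(s)$ and the coarser $\|\tilde Z\|_{\frac{1}{\sigma}}^2\le (\tilde C_{HP}+\max\eta)C_\lambda^4$, yields the contribution $\frac{1}{2\delta}(\tilde C_{HP}+\max\eta)C_\lambda^4\,E_y(s)$ together with a volume term. Finally, the pure boundary piece $-\tilde Z(1)\int_s^T y_t(t,1)y(t,1)\,dt$ is estimated via $|\tilde Z(1)|\le C_\lambda^2$, Young's with weight $\delta$, and $y^2(t,1)\le 2C_\lambda^2 E_y(t)$, producing the remaining $\frac{1}{2\delta}$ on $E_y(s)$ and completing the coefficient $\frac{\delta}{\boldsymbol{1_\ve}}\bigl(1+\frac{C_\lambda^2}{\boldsymbol{1_\ve}\min^2\eta}\bigr)$ on $\int_s^T E_y(t)\,dt$, giving \eqref{Stimabo}.

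The main obstacle is the careful bookkeeping of Young weights and the judicious use of the \emph{two different} $\tilde Z$-norm bounds: the sharper $\tilde C_{HP}C_\lambda^4$ must be used in the boundary-in-time piece (producing the coefficient involving only $\tilde C_{HP}$), while the coarser $(\tilde C_{HP}+\max\eta)C_\lambda^4$ is used in the volume piece. The identification $C_\lambda^2=1/(\boldsymbol{1_\ve}\min_{[0,1]}\eta)$ is crucial for reassembling the factorised form of \eqref{Stimabo}; the dichotomy $\boldsymbol{1_\ve}=1$ when $\lambda\le 0$, $\boldsymbol{1_\ve}=\ve$ when $\lambda\in(0,1/C_{HP})$, is absorbed into the coercivity inequality \eqref{startondino} guaranteed by Hypothesis~\ref{hyp2}.
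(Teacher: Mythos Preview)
Your proposal is correct and follows essentially the same route as the paper: introduce the elliptic multiplier $Z$ from Proposition~\ref{Prop2.2} with $\gamma=y(t,1)$, derive the identity $\int_s^T y^2(t,1)\,dt=-[\int_0^1\frac{y_tZ}{\sigma}]_s^T+\int_{Q_s}\frac{y_tZ_t}{\sigma}-\int_s^T(y_tZ)(t,1)$, and estimate the three pieces via Young's inequality together with Theorem~\ref{energiadecrescente} and the bounds \eqref{02}, \eqref{starnew}, \eqref{disclambda}. Your explicit factorization $Z(t,x)=y(t,1)\tilde Z(x)$ (equivalently, the paper differentiates \eqref{problem1} in $t$ to obtain the same bounds on $Z_t$) and the identification $C_\lambda^2=1/(\boldsymbol{1_\ve}\min\eta)$ are cosmetic conveniences, not a different argument.
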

\begin{proof}The proof is similar to the one of Proposition 3.3 in \cite{fm}, so we will be sketchy.
Fix $t \in [s,T]$, set $\gamma =y(t,1)$ and let $z=z(t,\cdot)$ be the unique solution of 
\[
\int_0^1 \eta z'\phi' dx- \lambda \int_0^1 \frac{z\phi}{\sigma d}dx+ \beta z(1)\phi(1) = \gamma\phi(1), \quad \forall \; \phi \in H^1_{\frac{1}{\sigma},0}(0,1).
\]
By  Proposition \ref{Prop2.2}, $z(t,\cdot) \in D(A_\lambda)$ for all $t$ and solves
\begin{equation}\label{problem1}
\begin{cases}
&A_\lambda z=0,\\
&\eta z_x(t,1) +\beta z(t,1)=\gamma.
\end{cases}
\end{equation}
As in \cite{fm}, we
multiply the equation in \eqref{mainequation}  by $\ds \frac{z}{\sigma}$ and integrate over $Q_s$. Then, applying Lemma \ref{lemmalimits}, we have (notice that $z_t\in L^2_{\frac{1}{\sigma}}(0,1)$ by \eqref{02})
\begin{equation}\label{star1}
\int_0^1 \left[ y_t\frac{z}{\sigma}\right]_{t=s}^{t=T} dx- \int_{Q_s} y_t \frac{z_t}{\sigma}dxdt  - \lambda\int_{Q_s} \frac{y}{\sigma d}zdxdt= \int_s^T \eta y_x(t,1) z(1)dt -\int_{Q_s} \eta y_xz_xdxdt.
\end{equation}
By multiplying the equation in \eqref{problem1} by $\ds \frac{y}{\sigma}$, integrating on $Q_s$ and using the boundary conditions, one has
\[
\begin{aligned}  \int_s^T( \gamma - \beta z(t,1))y(t,1) dt = \int_{Q_s} \eta z_x y_x dxdt-  \lambda\int_{Q_s} \frac{y}{\sigma d}zdx dt.
\end{aligned}
\]
Substituting in \eqref{star1} and recalling that $y$ solves \eqref{mainequation}, so that, in particular, $\eta y_x(t,1)+\beta y(t,1)=-y_t(t,1)$, and $\gamma = y(t,1)$, we have
\begin{equation}\label{3pezzetti}
\int_s^Ty^2(t,1)dt = \int_{Q_s}\frac{y_tz_t}{\sigma}dxdt - \int_s^T(y_t z)(t,1)dt - \int_0^1 \left[\frac{y_t z}{\sigma}\right]_{t=s}^{t=T}dx.
\end{equation}
Hence, to estimate $\int_s^Ty^2(t,1)dt$, we have to consider the right-hand-side \eqref{3pezzetti}. 
As a first step, observe that, if $\lambda \le 0$, then
$$
\frac{1}{2}\int_0^1\left(\frac{1}{\sigma} y_t^2(t, x)+\eta y_x^2(t, x) \right)dx\le E_y(t) 
$$
for all $ t \in [s,T]$. If $\lambda \in \left(0, \ds \frac{1}{C_{HP}}\right)$ and $\ve \in (0,1)$ is as in \eqref{laepsi}, we obtain \eqref{startondino}, which implies that
\[
\begin{aligned}
2 E_y(t) &\ge \ve\int_0^1 \eta y_x^2(t,x)dx + \int_0^1 \frac{y_t^2(t,x)}{\sigma(x)}dx + \beta y^2(t,1)\\
&\ge \ve \left(\int_0^1 \eta y_x^2(t,x)dx + \int_0^1 \frac{y_t^2(t,x)}{\sigma(x)}dx \right) + \beta y^2(t,1);
\end{aligned}
\]
in particular,
\[
\frac{1}{2}\int_0^1\left(\frac{1}{\sigma} y_t^2(t, x)+\eta y_x^2(t, x) \right)dx\le \frac{1}{\ve}E_y(t)
\]
for all $t \ge0$.

Hence, for all considered $\lambda$ and all $t>0$, we have
\begin{equation}\label{stimae}
\frac{1}{2}\int_0^1\left(\frac{1}{\sigma} y_t^2(t, x)+\eta y_x^2(t, x) \right)dx\le \frac{1}{\1e}E_y(t)
\end{equation}
and
\begin{equation}\label{y(1)}
\frac{1}{2}y^2(t,1) = \frac{1}{2}\left(\int_0^1y_x(t,x)dx\right)^2\leq\frac{1}{2}\int_0^1 y_x^2(t,x) dx \le \frac{1}{2}\frac{1}{\min_{[0,1]}\eta} \int_0^1 (\eta y_x^2 )(t,x) dx \le \frac{1}{\1e \min_{[0,1]}\eta} E_y(t).
\end{equation}

Let us start estimating the third term in \eqref{3pezzetti}. 
To this aim, observe that
\begin{equation}\label{stima bo}
\int_0^1 (\eta z_x^2) \le \frac{1}{\boldsymbol{1_\ve}}|||z|||_1^2\end{equation}
by \eqref{02bis} and \eqref{02bis1}.
Hence, by applying Proposition \ref{propL2}, \eqref{stima bo}, \eqref{02}, \eqref{stimae}, \eqref{y(1)}, Theorem \ref{energiadecrescente}, and recalling that $\gamma= y(t,1)$, we have 
\[
\begin{aligned}
\int_0^1\left| \frac{y_t z}{\sigma}(\tau, x)\right|dx&\le \frac{1}{2} \int_0^1 \frac{y_t^2(\tau, x)}{\sigma} dx + \frac{1}{2} \int_0^1 \frac{z^2(\tau, x)}{\sigma}dx \\
&\le  \frac{1}{2}\int_0^1 \frac{y_t^2(\tau, x)}{\sigma} dx +\frac{1}{2\min_{[0,1]}\eta}\tilde C_{HP}\int_0^1 (\eta z_x^2)(\tau, x) dx \\
& \le \frac{1}{2} \int_0^1 \frac{y_t^2(\tau, x)}{\sigma} dx +\frac{\tilde C_{HP}C_\lambda^2}{\boldsymbol{1_\ve}2\min_{[0,1]}\eta}y^2(t,1)\\
& \le \frac{1}{\1e} E_y(\tau) +\frac{\tilde C_{HP}C_\lambda^2}{\1e^2(\min_{[0,1]} \eta)^2}E_y(t)\\
& \le \frac{1}{\1e} \left( 1+ \frac{\tilde C_{HP}C_\lambda^2}{\boldsymbol{1_\ve}(\min_{[0,1]} \eta)^2}\right) E_y(s)
\end{aligned}
\]
for all $\tau \in [s,T]$. Here $C_\lambda$ is the constant defined in \eqref{Clambda}.
Thus, again by Theorem \ref{energiadecrescente}, we have
\begin{equation}\label{terzo3pez}
\left|  \int_0^1 \left[\frac{y_t z}{\sigma}\right]_{t=s}^{t=T}dx \right| \le \frac{2}{\1e} \left( 1+ \frac{\tilde C_{HP}C_\lambda^2}{\boldsymbol{1_\ve}(\min_{[0,1]} \eta)^2}\right) E_y(s).
\end{equation}

Now, consider the second term in \eqref{3pezzetti}.
Then, for any $\delta >0$ we have
\begin{equation}\label{7.1}
\int_s^T |(y_tz)(t,1) |dt \le \frac{1}{2\delta} \int_s^T  y_t^2 (t,1)dt + \frac{\delta}{2} \int_s^T z^2(t,1)dt.
\end{equation}

Starting as in \eqref{y(1)}, by  \eqref{02}, \eqref{stima bo} and \eqref{y(1)}, we have
\[
\begin{aligned}
z^2(t,1) &\le \frac{1}{\min_{[0,1]}\eta}\int_0^1 (\eta z_x^2)(t,x) dx \le \frac{1}{\1e\min_{[0,1]}\eta}|||z|||_1^2 \\
&\le \frac{1}{\1e\min_{[0,1]}\eta} C_\lambda^2 y^2(t,1)\le \frac{2}{\1e^2\min_{[0,1]}^2 \eta}C_\lambda^2  E_y(t).
\end{aligned}
\]
Thus, by Theorem \ref{energiadecrescente}, we have
\begin{equation}\label{secondo3pez}
\begin{aligned}
\int_s^T |(y_tz)(t,1)| dt &\le \frac{1}{2\delta} \int_s^T  y_t^2 (t,1)dt + \frac{\delta}{2} \int_s^T z^2(t,1)dt\\
& \le - \frac{1}{2\delta} \int_s^T \frac{dE_y(t)}{dt} dt + \delta \frac{1}{\1e^2\min_{[0,1]}^2 \eta}C_\lambda^2   \int_s^T E_y(t) dt  \\
& \le \frac{E_y(s)}{2\delta} +  \delta \frac{1}{\1e^2\min_{[0,1]}^2 \eta}C_\lambda^2   \int_s^T E_y(t) dt.
\end{aligned}
\end{equation}

Finally, we estimate $\ds\int_{Q_s}\frac{y_tz_t}{\sigma}dxdt $. To this aim, consider again \eqref{problem1} and differentiate with respect to $t$.
 Then
\[
\begin{cases}
&- \sigma (\eta z_{tx})_x - \lambda \ds\frac{z_t}{d}=0,\\
&\eta z_{tx}(t,1) +\beta z_t(t,1)=\gamma_t =y_t(t,1).
\end{cases}
\]
By \eqref{02}, $z_t$ satisfies the estimates
\[
|||z_t|||^2_1 \le C_\lambda^2 y_t^2(t,1) \quad \text{and} \quad \|z_t\|^2_{{\frac{1}{\sigma}}} \le (\tilde C_{HP} + \max_{[0,1]}\eta ) C_\lambda^4y_t^2(t,1).
\]
Thus, by \eqref{stimae}, Theorem \ref{energiadecrescente} we find
\begin{equation}\label{primo3pez}
\begin{aligned}
\int_{Q_s}\left|\frac{y_t z_t}{\sigma}\right|dxdt &\le \frac{\delta}{2} \int_{Q_s} \frac{y_t^2}{\sigma}dxdt + \frac{1}{2\delta}\int_{Q_s} \frac{z_t^2}{\sigma}dxdt\\
&\le \frac{\delta}{\1e} \int_s^TE_y(t)dt + \frac{1}{2\delta}(\tilde C_{HP} + \max_{[0,1]}\eta ) C_\lambda^4\int_s^T y_t^2(t,1)dt\\
&= \frac{\delta}{\1e} \int_s^TE_y(t)dt - \frac{1}{2\delta}(\tilde C_{HP} + \max_{[0,1]}\eta ) C_\lambda^4\int_s^T\frac{dE_y(t)}{dt}dt\\
& \le  \frac{\delta}{\1e} \int_s^TE_y(t)dt+ \frac{1}{2\delta}(\tilde C_{HP} + \max_{[0,1]}\eta ) C_\lambda^4E_y(s).
\end{aligned}
\end{equation}
Hence, going back to \eqref{3pezzetti}, by \eqref{primo3pez}, \eqref{secondo3pez} and \eqref{terzo3pez} we get
\[
\begin{aligned}
\int_s^Ty^2(t,1)dt  &\le \left(\frac{2}{\1e} + \frac{2}{\1e^2} \frac{\tilde C_{HP}C_\lambda^2}{\min_{[0,1]}^2\eta}+ \frac{1}{2\delta} +\frac{1}{2\delta}(\tilde C_{HP} + \max_{[0,1]}\eta ) C_\lambda^4\right) E_y(s)\\
&+  \frac{\delta}{\1e}\left(1+  \frac{1}{\1e\min_{[0,1]}^2 \eta}C_\lambda^2 \right)\int_s^T E_y(t) dt,\end{aligned}
\]
as claimed.
\end{proof}

As in \cite{fm}, we assume an additional hypothesis on functions $a$ and $b$. Clearly, this assumption is stronger with respect to the one in \cite{fm} since here we have also a singular term to manage.
\begin{hypothesis}\label{Ass2}
Hypothesis \ref{hyp3} holds and there exists $\ve_0>0$ such that $(2-K_a-2K_d)a-2x|b|\geq \ve_0a$ for every $x\in [0,1]$.
\end{hypothesis}
\begin{proposition}\label{Prop4}
 Assume Hypothesis $\ref{Ass2}$ and let $y$ be a classical solution of \eqref{mainequation}. Then, for any $T>s>0:$
\newline
\begin{equation}\label{tondonew}
\begin{aligned}
\frac{\ve_0}{2} \int_{Q_s} \left(\frac{y_t^2}{\sigma} + \eta y_x^2 - \lambda\frac{y^2}{\sigma d}  \right)dxdt
&\le \left( 2\Theta +\frac{1}{\sigma(1)} + \frac{1}{\eta(1)}+\frac{\beta}{\eta(1)}+ \frac{K_a}{4}\right)E_y(s) 
\\
&+  \left(\frac{\beta^2}{\eta(1)}+ \frac{K_a\beta}{2}+\frac{\beta}{\eta(1)}+ \frac{K_a}{4}+ \frac{\lambda}{\sigma(1)d(1)} \right) \int_s^Ty^2(t,1)dt,
\end{aligned}
\end{equation}
if $\lambda \ge 0$, and
\begin{equation}\label{tondonewbis}
\begin{aligned}
\frac{\ve_0}{2} \int_{Q_s} \left(\frac{y_t^2}{\sigma} + \eta y_x^2 - \lambda\frac{y^2}{\sigma d}  \right)dxdt & \le\left( 2\Theta +\frac{1}{\sigma(1)} + \frac{1}{\eta(1)}+\frac{\beta}{\eta(1)}+ \frac{K_a}{4} \right)E_y(s) 
\\&+  \left(\frac{\beta^2}{\eta(1)}+ \frac{K_a\beta}{2}+\frac{\beta}{\eta(1)}+ \frac{K_a}{4} \right) \int_s^Ty^2(t,1)dt\\
& -2\lambda C_{HP} \left(1+ \frac{3}{2}K_a + K_d+M\right) \int_{s}^T E_y(t)dt,
\end{aligned}
\end{equation}
if $\lambda <0$.
Here 
\begin{equation} \label{Theta}\ds \Theta:= \frac{2}{\1e}  \max\left\{\frac{1}{a(1)}+ \frac{K_a\tilde C_{HP}}{4\min_{[0,1]}\eta },1+ \frac{K_a}{4} \right\},
\end{equation}
 $\1e$ is given in \eqref{1epsi} and
\[
M:= \begin{cases}
\tilde{M}, & K_a \le 1,\\
\ds \left\| \frac{xb}{a}\right\|_{L^\infty(0,1)}, & K_a >1,
\end{cases}
\]
where $\tilde{M}$ is defined in \eqref{Mtilde}.
\end{proposition}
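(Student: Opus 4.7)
The plan is to combine the identity \eqref{DT} with the pointwise sign/size information coming from Hypothesis \ref{Ass2} (which controls the three coefficients $\alpha_1,\alpha_2,\alpha_3$ multiplying $y_t^2/\sigma$, $\eta y_x^2$ and $y^2/(\sigma d)$ on the right of \eqref{DT}), and then to estimate the abstract boundary/time-slice quantity $(B.T.)$ defined in \eqref{BT} by means of Young's inequality, the Hardy--Poincar\'e inequalities of Proposition \ref{propL2}, the boundary coupling $\eta(1)y_x(t,1)+\beta y(t,1)=-y_t(t,1)$, and the energy identity of Theorem \ref{energiadecrescente}.

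First I would exploit the fact that $a$ and $d$ are (WD) or (SD), so $x|a'|/a\le K_a$ and $x|d'|/d\le K_d$ pointwise; together with Hypothesis \ref{Ass2}, rewritten as $1-K_a/2-K_d-x|b|/a\ge \ve_0/2$, this yields $\alpha_1,\alpha_2\ge \ve_0/2$, and in particular $\alpha_3\ge \ve_0/2$. Since $\alpha_3$ enters \eqref{DT} multiplied by $\lambda$, the sign of $\lambda$ is decisive. For $\lambda\ge 0$ one has $\lambda\alpha_3\ge -\lambda\ve_0/2$ pointwise (because $\alpha_3\ge \ve_0/2\ge -\ve_0/2$), so \eqref{DT} gives directly $(B.T.)\ge \frac{\ve_0}{2}\int_{Q_s}(y_t^2/\sigma+\eta y_x^2-\lambda y^2/(\sigma d))\,dxdt$. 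For $\lambda<0$ I would instead combine the complementary upper bound $\alpha_3\le 1+\tfrac{3}{2}K_a+K_d+M$ (the $K_a>1$ case relying on the $L^\infty$ control of $xb/a$ built into Hypothesis \ref{hyp3}) with the singular Hardy--Poincar\'e inequality $\int y^2/(\sigma d)\le C_{HP}\int \eta y_x^2\le 2C_{HP} E_y(t)$ (Proposition \ref{propL2}) to transfer the mismatch between $\lambda\alpha_3$ and $-\lambda\ve_0/2$ into the extra summand $-2\lambda C_{HP}(1+\tfrac{3}{2}K_a+K_d+M)\int_s^T E_y(t)\,dt$ appearing in \eqref{tondonewbis}.

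Next I would bound $(B.T.)$ from above, starting from its definition \eqref{BT}. The time-slice contributions $\int_0^1[-2xy_xy_t/\sigma+(K_a/2)yy_t/\sigma]_{t=s}^{t=T}dx$ are handled via Young's inequality, using that $x^2/\sigma=x^2\eta/a\le \eta/a(1)$ (by \eqref{nondecreasxg/a}, since $K_a\le 2$) and that $\int y^2/\sigma\le (\tilde C_{HP}/\min_{[0,1]}\eta)\int \eta y_x^2$ from Proposition \ref{propL2}; each resulting piece is comparable to $E_y$ via \eqref{stimae}, and the monotonicity of $E_y$ (Theorem \ref{energiadecrescente}) lets me replace $E_y(T)$ by $E_y(s)$, producing exactly the constant $\Theta$ of \eqref{Theta}. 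For the $x=1$ piece I would insert $\eta(1)y_x(t,1)=-y_t(t,1)-\beta y(t,1)$ into the $\eta y_x^2$ and $\eta y_x y$ terms, expand, and then use $\int_s^T y_t^2(t,1)\,dt=E_y(s)-E_y(T)\le E_y(s)$ (again Theorem \ref{energiadecrescente}) for every $y_t^2(t,1)$ contribution. What survives reassembles precisely the coefficient of $\int_s^T y^2(t,1)\,dt$ in \eqref{tondonew}; for $\lambda<0$ the residual term $\lambda\int y^2(t,1)/(\sigma(1)d(1))\,dt$ is non-positive and may be dropped, matching \eqref{tondonewbis}.

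The most delicate point is the treatment of $\lambda\int_{Q_s}\alpha_3 y^2/(\sigma d)\,dxdt$ when $\lambda<0$, where the sign obstructs the clean comparison $\lambda\alpha_3\ge -\lambda\ve_0/2$ available for $\lambda\ge 0$ and forces the Hardy--Poincar\'e detour with its extra $\int_s^T E_y(t)\,dt$ price. The hypothesis $K_a+2K_d\le 2$ is precisely what keeps the singular inequality \eqref{stima1} available at this step, while the $L^\infty$ bound on $xb/a$ in the case $K_a>1$ is what keeps $M$ finite; together they interlock the pointwise estimates of the first step with the upper estimate of the third step and give rise to the two distinct shapes of \eqref{tondonew} and \eqref{tondonewbis}.
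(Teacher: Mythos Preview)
Your proposal is correct and follows essentially the same route as the paper. The pointwise lower bounds on the three coefficients are precisely \eqref{3infila}; the time-slice part of $(B.T.)$ is handled via Young's inequality, $x^2/a\le 1/a(1)$ and Proposition~\ref{propL2} to produce the constant $\Theta$; the $x=1$ boundary piece uses the feedback relation $\eta(1)y_x(t,1)=-y_t(t,1)-\beta y(t,1)$ together with Theorem~\ref{energiadecrescente} (the paper defers this computation to \cite[Eq.~(3.22)]{fm}); and for $\lambda<0$ the singular term is controlled via the upper bound $\alpha_3\le 1+\tfrac32 K_a+K_d+M$ combined with $\int_0^1 y^2/(\sigma d)\,dx\le 2C_{HP}E_y(t)$, exactly as you outline.
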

\begin{proof}
Thanks to Hypothesis \ref{Ass2} we have
\begin{equation}\label{3infila}
\begin{aligned}
&1-\frac{x(a'-b)}{a}+\frac{K_a}{2}\geq \frac{(2-K_a-2K_d)a+2K_da-2x|b|}{2a}\geq \frac{\ve_0}{2}+K_d>\frac{\ve_0}{2},\\&
1-x\frac{b}{a}-\frac{K_a}{2}\geq \frac{(2-K_a-2K_d)a+2K_da-2x|b|}{2a}\geq \frac{\ve_0}{2}+K_d>\frac{\ve_0}{2},\\
&
1-\frac{x\left(a^{\prime}-b\right)}{a}-\frac{xd^\prime}{d}+\frac{K_a}{2}\ge \frac{(2-2K_a)a  -2x|b|+K_aa-2K_da}{2a} \ge \frac{\ve_0}{2}.
\end{aligned}
\end{equation}
Now, we distinguish between the case $\lambda \ge0$ and $\lambda <0$.

\vspace{0.3cm}
\noindent {{\bf Case} $\boldsymbol{\lambda \ge0}$}. In this case, the distributed terms in \eqref{DT} can be estimated from below in the following way:
\begin{equation}\label{star}
\begin{aligned}
&\int_{Q_s}\left(1-\frac{x\left(a^{\prime}-b\right)}{a}+\frac{K_a}{2}\right) \frac{1}{\sigma} y_t^2 d x d t+\int_{Q_s}\left(1-x \frac{b}{a}-\frac{K_a}{2}\right) \eta y_x^2 d x d t \\
& + \lambda \int_{Q_s}\left(1-\frac{x\left(a^{\prime}-b\right)}{a}-\frac{xd^\prime}{d}+\frac{K_a}{2}\right) \frac{y^2}{\sigma d} d x d t\\
&\ge  \frac{\ve_0}{2} \int_{Q_s} \left(\frac{y_t^2}{\sigma} + \eta y_x^2 - \lambda\frac{y^2}{\sigma d}  \right)dxdt.
\end{aligned}
\end{equation}
Now, we estimate the boundary terms in \eqref{BT} from above.  First of all, consider the term 
\[
\int_0^1\left( -2x\frac{y_xy_t}{\sigma} + \frac{K_a}{2} \frac{yy_t}{\sigma}\right) (\tau, x) dx
\]
for all $\tau \in [s,T]$. By \eqref{nondecreasxg/a}, one has that $\ds \frac{x^2}{a(x)} \le \frac{1}{a(1)}$; using this fact together with Proposition \ref{propL2} and \eqref{stimae}, one has
\[
\begin{aligned}
\int_0^1&\left( -2x\frac{y_xy_t}{\sigma} + \frac{K_a}{2} \frac{yy_t}{\sigma}\right)(\tau, x) dx \\
&\le \int_0^1 \frac{x^2y_x^2\eta}{a}(\tau, x) dx+ \int_0^1\frac{y_t^2}{\sigma} (\tau, x)dx+ \frac{K_a}{4}\int_0^1 \frac{y_t^2}{\sigma}(\tau, x)dx + \frac{K_a}{4}\int_0^1 \frac{y^2}{\sigma}(\tau, x)dx\\
&\le \frac{1}{a(1)}\int_0^1 \eta y_x^2(\tau, x) dx + \left(1 +\frac{K_a}{4}\right)\int_0^1 \frac{y_t^2}{\sigma}(\tau, x)dx +  \frac{K_a\tilde C_{HP}}{4\min_{[0,1]}\eta }\int_0^1\eta y_x^2 (\tau, x)dx\\
&\le \frac{2}{\1e}  \max\left\{\frac{1}{a(1)}+ \frac{K_a\tilde C_{HP}}{4\min_{[0,1]}\eta },1+ \frac{K_a}{4} \right\}E_y(\tau),
\end{aligned}
\]
for all $\tau \in [s, T]$.
In particular,
\begin{equation}\label{stimay}
\begin{aligned}
&\int_0^1\left[\left( -2x\frac{y_xy_t}{\sigma} + \frac{K_a}{2} \frac{yy_t}{\sigma}\right)(\tau, x) dx \right]_{\tau=s}^{\tau=T} \le 2\Theta E_y(s),
\end{aligned}
\end{equation}
where $\ds \Theta$ is defined in \eqref{Theta}. By \eqref{DT}, \eqref{star} and the previous inequality, we get
\begin{equation}\label{tondo}
\begin{aligned}
\frac{\ve_0}{2} \int_{Q_s} \left(\frac{y_t^2}{\sigma} + \eta y_x^2 - \lambda\frac{y^2}{\sigma d}  \right)dxdt&\le 2\Theta E_y(s) +\int_s^T\left[\frac{1}{\sigma} y_t^2+\eta y_x^2-\frac{K_a}{2} \eta y_x y+ \frac{\lambda}{\sigma d}y^2 \right](t, 1) d t.\end{aligned}
\end{equation}
As \cite[Equation (3.22)]{fm}, one can prove that
\begin{equation}\label{tondo1}
\begin{aligned}
\int_s^T \left[\frac{1}{\sigma} y_t^2+\eta y_x^2-\frac{K_a}{2} \eta y_x y+ \frac{\lambda}{\sigma d}y^2 \right](t, 1)dt &\le  \left( \frac{1}{\sigma(1)} + \frac{1}{\eta(1)}+\frac{\beta}{\eta(1)}+ \frac{K_a}{4} \right)(E_y(s)-E_y(T)) \\
&+  \left(\frac{\beta^2}{\eta(1)}+ \frac{K_a\beta}{2}+\frac{\beta}{\eta(1)}+ \frac{K_a}{4} + \frac{\lambda}{\sigma(1)d(1)}\right) \int_s^Ty^2(t,1)dt.
\end{aligned}
\end{equation}
Hence, by \eqref{stimay}, \eqref{tondo} and \eqref{tondo1}, we have
\[
\begin{aligned}
&\frac{\ve_0}{2} \int_{Q_s} \left(\frac{y_t^2}{\sigma} + \eta y_x^2 - \lambda\frac{y^2}{\sigma d}  \right)dxdt\\
&
\le \left( 2\Theta +\frac{1}{\sigma(1)} + \frac{1}{\eta(1)}+\frac{\beta}{\eta(1)}+ \frac{K_a}{4}\right)E_y(s) 
\\
&+  \left(\frac{\beta^2}{\eta(1)}+ \frac{K_a\beta}{2}+\frac{\beta}{\eta(1)}+ \frac{K_a}{4} + \frac{\lambda}{\sigma(1)d(1)}\right) \int_s^Ty^2(t,1)dt,
\end{aligned}
\]
as claimed.

\vspace{0.3cm}
\noindent {{\bf Case} $\boldsymbol{\lambda \le0}$}. In this case, by definition of energy, one has
\[
-\lambda\int_{Q_s}\frac{y^2}{\sigma d} d x d t\le -2\lambda C_{HP} \int_s^T E_y(t)dt;\footnote{Of course, a straightforward estimate could be $-\lambda\int_{Q_s}\frac{y^2}{\sigma d} d x d t\le 2 \int_s^T E_y(t)dt$, but we could not manage such a term in Theorem \ref{Energiastima}.}
\]
thus, by \eqref{3infila}, \eqref{DT} and by estimating the boundary terms as in the previous case (with the exception that in \eqref{tondo1} the term with $\lambda$ disappears), we find
\[
\begin{aligned}
&\frac{\ve_0}{2} \int_{Q_s} \left(\frac{y_t^2}{\sigma} + \eta y_x^2 - \lambda\frac{y^2}{\sigma d}  \right)dxdt \le \int_{Q_s}\left(1-\frac{x\left(a^{\prime}-b\right)}{a}+\frac{K_a}{2}\right) \frac{1}{\sigma} y_t^2 d x d t+\int_{Q_s}\left(1-x \frac{b}{a}-\frac{K_a}{2}\right) \eta y_x^2 d x d t \\
& \le \int_0^1\left[\left( -2x\frac{y_xy_t}{\sigma} + \frac{K_a}{2} \frac{yy_t}{\sigma}\right)(\tau, x) dx \right]_{\tau=s}^{\tau=T} +\int_s^T\left[\frac{1}{\sigma} y_t^2+\eta y_x^2-\frac{K_a}{2} \eta y_x y+ \frac{\lambda}{\sigma d}y^2 \right](t, 1) d t\\
&-
 \lambda \int_{Q_s}\left(1-\frac{x\left(a^{\prime}-b\right)}{a}-\frac{xd^\prime}{d}+\frac{K_a}{2}\right) \frac{y^2}{\sigma d} d x d t\\
 & \le\left( 2\Theta +\frac{1}{\sigma(1)} + \frac{1}{\eta(1)}+\frac{\beta}{\eta(1)}+ \frac{K_a}{4} \right)E_y(s) 
+  \left(\frac{\beta^2}{\eta(1)}+ \frac{K_a\beta}{2}+\frac{\beta}{\eta(1)}+ \frac{K_a}{4} \right) \int_s^Ty^2(t,1)dt\\
& -2\lambda C_{HP} \left(1+ \frac{3}{2}K_a + K_d+M\right)\int_{s}^T E_y(t)dt.
\end{aligned}
\]
Our proof is then concluded.
\end{proof}
As a consequence of Propositions \ref{Prop3} and \ref{Prop4}, we have the main result of the paper. As a first step, define
\[C_1:= 2\Theta +\frac{1}{\sigma(1)} + \frac{1}{\eta(1)}+\frac{\beta}{\eta(1)}+ \frac{K_a}{4} ,
\]
\[
C_2:=\begin{cases}\frac{\beta^2}{\eta(1)}+ \frac{K_a\beta}{2}+\frac{\beta}{\eta(1)}+ \frac{K_a}{4}+ \frac{\beta \ve_0}{2}+\frac{\lambda}{\sigma(1)d(1)}, & \lambda \ge 0,\\
\frac{\beta^2}{\eta(1)}+ \frac{K_a\beta}{2}+\frac{\beta}{\eta(1)}+ \frac{K_a}{4}+ \frac{\beta \ve_0}{2}, & \lambda <0,
\end{cases}
\]
\[C_3:=\frac{2}{\1e} +  \frac{2}{\1e^2\min_{[0,1]}^2\eta}\tilde C_{HP}C_\lambda^2+ \frac{1}{2\delta} +\frac{1}{2\delta}(\tilde C_{HP} + \max_{[0,1]}\eta ) C_\lambda^4
\]
and
\[C_4:=\frac{1}{\1e}\left(1+  \frac{1}{\1e\min_{[0,1]}^2 \eta}C_\lambda^2 \right)\]
where
$\delta >0$.
\begin{thm}\label{Energiastima}
 Assume Hypothesis $\ref{Ass2}$ and if $\lambda <0$, then 
 $\lambda \in \left(- \frac{\ve_0}{2C_{HP}\left(1+ \frac{3}{2}K_a + K_d+M\right)}, 0\right)$. Let $y$ be a mild solution of \eqref{mainequation}. Then, for all $t \ge \mathcal M$ and for all $\ds\delta \in \left(0, \delta_0\right)$,
\begin{equation}\label{estiener}
E_y(t) \le E_y(0) e^{1- \frac{t}{\mathcal M}},
\end{equation}
where $
\delta_0:= \min\left\{\frac{\ve_0}{C_2C_4},  \frac{\ve_0+2\lambda_{HP}\left(1+ \frac{3}{2}K_a + K_d+M\right)}{C_2C_4}\right\}
$ and
\begin{equation}\label{M}
\mathcal M:=\begin{cases}\frac{C_1+ C_2C_3}{\ve_0 -  C_2C_4\delta}, & \lambda \in \left[0,\frac{1}{C_{HP}}\right),\\
\frac{C_1+ C_2C_3}{\ve_0 -  C_2C_4\delta+2\lambda C_{HP} \left(1+ \frac{3}{2}K_a + K_d+M\right)},& \lambda \in \left(- \frac{\ve_0}{2C_{HP}\left(1+ \frac{3}{2}K_a + K_d+M\right)}, 0\right).
\end{cases}
\end{equation}
\end{thm}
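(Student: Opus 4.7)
The plan is to combine Propositions~\ref{Prop3} and~\ref{Prop4} into a single Datko-type inequality of the form $\int_s^\infty E_y(t)\,dt \le \mathcal{M} E_y(s)$, and then invoke the classical lemma which turns such an inequality into exponential decay. I would work first with a classical solution $y$ so that all the computations in the preceding propositions are justified, and extend to mild solutions at the end by a density argument based on Theorem~\ref{thmmildclassolution} (continuous dependence on initial data).

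The key algebraic identity is the one coming from the definition of the energy itself:
\[
\int_{Q_s}\!\left(\frac{y_t^2}{\sigma}+\eta y_x^2-\frac{\lambda}{\sigma d}y^2\right)\!dx\,dt \;=\; 2\int_s^T E_y(t)\,dt \;-\;\beta\int_s^T y^2(t,1)\,dt.
\]
Substituting this into the left-hand side of \eqref{tondonew} (respectively \eqref{tondonewbis}) converts Proposition~\ref{Prop4} into a lower bound on $\int_s^T E_y(t)\,dt$ of the form
\[
\ve_0 \int_s^T E_y(t)\,dt \;\le\; C_1 E_y(s) + C_2 \int_s^T y^2(t,1)\,dt \;-\;D\int_s^T E_y(t)\,dt,
\]
where $D=2\lambda C_{HP}(1+\tfrac{3}{2}K_a+K_d+M)$ (note $D=0$ when $\lambda\ge 0$ because I collapse the two cases by just reading \eqref{tondonew}). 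The constant $C_2$ here absorbs the extra $\tfrac{\beta\ve_0}{2}$ term arising from the right-hand side of the identity above; the case split in the definition of $C_2$ is exactly explained by whether the boundary contribution $\tfrac{\lambda}{\sigma(1)d(1)}$ from \eqref{tondo1} is present or not.

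Next I would insert the estimate \eqref{Stimabo} from Proposition~\ref{Prop3} for $\int_s^T y^2(t,1)\,dt$, obtaining
\[
\bigl(\ve_0+D-C_2C_4\delta\bigr)\int_s^T E_y(t)\,dt \;\le\; \bigl(C_1+C_2 C_3\bigr) E_y(s).
\]
For this to deliver $\int_s^T E_y(t)\,dt\le \mathcal{M} E_y(s)$ one needs the bracketed coefficient on the left to be strictly positive. If $\lambda\ge 0$ then $D=0$ and any $\delta\in(0,\ve_0/(C_2C_4))$ works. If $\lambda<0$ the sign of $D$ is negative, so the smallness assumption $\lambda > -\ve_0/[2C_{HP}(1+\tfrac{3}{2}K_a+K_d+M)]$ is precisely what guarantees $\ve_0+D>0$, and then $\delta<\delta_0$ yields positivity with the two cases matching the definition of $\mathcal{M}$ in \eqref{M}. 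Letting $T\to +\infty$ and using $E_y\ge 0$ gives the Datko-type estimate $\int_s^\infty E_y(t)\,dt\le \mathcal{M} E_y(s)$ for every $s\ge 0$.

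Finally, to derive \eqref{estiener} I would apply the classical decay lemma: setting $F(s):=\int_s^\infty E_y(t)\,dt$, Theorem~\ref{energiadecrescente} gives $E_y$ non-increasing, hence $F'(s)=-E_y(s)\le -F(s)/\mathcal{M}$, so $F(s)\le \mathcal{M} E_y(0)\,e^{-s/\mathcal{M}}$; then for $t\ge \mathcal{M}$,
\[
\mathcal{M}\, E_y(t) \;\le\; \int_{t-\mathcal{M}}^t E_y(\tau)\,d\tau \;\le\; F(t-\mathcal{M}) \;\le\; \mathcal{M}\, E_y(0)\, e^{-(t-\mathcal{M})/\mathcal{M}},
\]
which is exactly \eqref{estiener}. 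The passage from classical to mild solutions follows by approximating $(y_0,y_1)\in H^1_{\frac{1}{\sigma},0}\times L^2_{\frac{1}{\sigma}}$ with more regular data, using that both $E_y(t)$ and $E_y(0)$ depend continuously on the data. The main obstacle is really the sign-bookkeeping in the case $\lambda<0$: the singular term produces a genuinely negative contribution $D\int_s^T E_y(t)\,dt$ which cannot be absorbed into the boundary terms, and so it must be compensated by $\ve_0$ coming from Hypothesis~\ref{Ass2}, which dictates the quantitative smallness condition on $|\lambda|$ appearing in the theorem.
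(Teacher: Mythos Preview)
Your proposal is correct and follows essentially the same approach as the paper: you combine Propositions~\ref{Prop3} and~\ref{Prop4} via the energy identity (which is exactly what the paper means by ``recalling the definition of $E_y$'' and explains the $\tfrac{\beta\ve_0}{2}$ in $C_2$), obtain the same inequality $(\ve_0+D-C_2C_4\delta)\int_s^T E_y\le (C_1+C_2C_3)E_y(s)$, and then invoke the Datko-type lemma and a density argument for mild solutions. The only cosmetic difference is that you spell out the proof of the decay lemma explicitly, whereas the paper cites it as \cite[Lemma~3.1]{fm}.
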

\begin{proof} 
As usual, let us start assuming that $y$ is a classical solution. 
If $y$ is the mild solution associated to the initial data $(y_0, y_1) \in \mathcal H_0$, consider a sequence $\{(y_0^n, y_1^n)\}_{n \in  \N} \in D(\mathcal A)$ that approximate $(y_0, y_1)$ and let $y^n$ be the classical solution of \eqref{mainequation} associated to $(y_0^n, y_1^n)$. With standard estimates coming from Theorem \ref{generator}, we can pass to the limit in \eqref{estiener} written for $y^n$ and obtain the desired result.
So, let $y$ be a classical solution. 

As a first step assume $\lambda \ge 0$.
By \eqref{tondonew} and \eqref{Stimabo}, recalling the definition of $E_y$,  we have 
\[
\begin{aligned}
\ve_0&\int_s^T E_y(t)dt \le C_1E_y(s) 
+  C_2\int_s^Ty^2(t,1)dt \\
&\le (C_1+ C_2C_3)E_y(s)+  C_2 C_4\delta \int_s^T E_y(t) dt.
\end{aligned}
\]
Hence
\[
\begin{aligned}
&\left[ \ve_0 -  C_2C_4\delta\right]\int_s^T E_y(t)dt 
\le (C_1+ C_2C_3)E_y(s).
\end{aligned}
\]
Choosing $\ds
\delta\in \left( 0, \frac{\ve_0}{C_2C_4}\right),
$ we have
\[
\int_s^T E_y(t)dt  \le \frac{C_1+ C_2C_3}{\ve_0 -  C_2C_4\delta}E_y(s)
\]
for all $s \in [0, +\infty)$. Thus,
we can apply \cite[Lemma 3.1]{fm}  obtaining
\[
E_y(t) \le E_y(0) e^{1- \frac{t}{\mathcal M}},
\]
for all $t \in [\mathcal M, +\infty)$,
where  $\mathcal M$ is as in the first line of \eqref{M}.

If $\lambda <0$, then from \eqref{tondonewbis} and \eqref{Stimabo} we get
\[
\begin{aligned}
\ve_0&\int_s^T E_y(t)dt \le C_1E_y(s) 
+  C_2\int_s^Ty^2(t,1)dt -2\lambda C_{HP} \left(1+ \frac{3}{2}K_a + K_d+M\right) \int_{s}^T E_y(t)dt\\
&\le (C_1+ C_2C_3)E_y(s)+  C_2 C_4\delta \int_s^T E_y(t) dt-2\lambda C_{HP} \left(1+ \frac{3}{2}K_a + K_d+M\right) \int_{s}^T E_y(t)dt.
\end{aligned}
\]
Hence
\[
\begin{aligned}
&\left[ \ve_0 -  C_2C_4\delta+2\lambda C_{HP} \left(1+ \frac{3}{2}K_a + K_d+M\right) \right]\int_s^T E_y(t)dt 
\le (C_1+ C_2C_3)E_y(s).
\end{aligned}
\]
Proceeding as before, with the choices of $\lambda$ and $\delta$ in the statement of the theorem, one has the claim.
\end{proof}

\begin{remark}
We notice that the condition on $\lambda$ given in Theorem \ref{Energiastima} is strictly related to the one given in \cite{fms} for the null controllability result.
\end{remark}

\section{Appendix}
For the readers' convenience, in this section we prove some results used throughout the previous sections.

\subsection{Proof of Theorem \ref{thmmildclassolution}}

In order to prove Theorem  \ref{thmmildclassolution} we introduce the Hilbert space
\begin{align}\label{Hilbert0}
\mathcal H_0 := H^1_{{\frac{1}{\sigma}},0}(0,1)\times L^2_{{\frac{1}{\sigma}}}(0,1), 
\end{align}
endowed with the inner product
\[
\langle (u, v), (\tilde u, \tilde v) \rangle_{\mathcal H_0}:=  \int_0^1 u'\tilde u'dx + \int_0^1 v\tilde v\frac{1}{\sigma}dx - \lambda \int_0^1 \frac{u \tilde u}{\sigma d} dx + \beta u(1) \tilde u(1)
\]
for every $(u, v), (\tilde u, \tilde v)  \in \mathcal H_0$, which induces the norm
\[
\|(u,v)\|_{\mathcal H_0}^2:=  \int_0^1 (u')^2dx + \int_0^1 v^2\frac{1}{\sigma}dx - \lambda \int_0^1 \frac{u^2}{\sigma d} dx + \beta u^2(1).
\]
Observe that if $u \in H^1_{{\frac{1}{\sigma},0}}(0,1)$, then $u$ is continuous, so that $u(1)$ is well defined. Moreover, being $\eta \in C^0[0,1] \cap C^1(0,1]$ and bounded away from 0, the norm $\|(u,v)\|_{\mathcal H_0}^2$ is equivalent to 
\[
\|(u,v)\|_1^2:=  \int_0^1\eta (u')^2dx + \int_0^1 v^2\frac{1}{\sigma}dx - \lambda \int_0^1 \frac{u^2}{\sigma d} dx + \beta u^2(1),\quad  (u, v) \in \mathcal H_0 .
\]
Obviously, to such a norm we associate the inner product
\[
\langle (u, v), (\tilde u, \tilde v) \rangle_1:=  \int_0^1 \eta u'\tilde u'dx + \int_0^1 v\tilde v\frac{1}{\sigma}dx- \lambda \int_0^1 \frac{u \tilde u}{\sigma d} dx + \beta u(1) \tilde u(1),
\]
which we will use from now on, being more convenient for our treatment.


In order to study the well posedness of \eqref{mainequation}, we introduce the matrix operator $(\cA,  D(\cA))$ given by
\[
\cA:= \begin{pmatrix} 0 & Id\\
A_\lambda&0 \end{pmatrix},\]
\[
D(\cA):= \left\{(u,v) \in D(A_\lambda) \times H^1_{{\frac{1}{\sigma},0}}(0,1): \eta u'(1)+v(1)+ \beta u(1)=0\right\} \subset \mathcal H_0,
\]
where  $(A_\lambda, D(A_\lambda))$ is defined in \eqref{Alambda}.
 In this way,  we can rewrite \eqref{mainequation} as the Cauchy problem
\begin{equation}\label{CP}
\begin{cases}
\dot \cY (t)= \cA \cY (t), & t \ge 0,\\
\cY(0) = \cY_0,
\end{cases}
\end{equation}
with
\[
\cY(t):= \begin{pmatrix} y\\ y_t \end{pmatrix} \; \text{ and }\; \cY_0:= \begin{pmatrix} y^0_T\\ y^1_T \end{pmatrix}.
\]
\begin{thm}\label{generator}
Assume Hypotheses $\ref{hyp1}$ and $\ref{hyp2}$ and let $K_a+ 2K_d \le 2$. Then the operator $(\cA, D(\cA))$ is non positive with dense domain and generates a contraction semigroup  $(S(t))_{t \ge 0}$. 
\end{thm}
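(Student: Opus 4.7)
The plan is to apply the Lumer--Phillips theorem, which requires three ingredients: dissipativity of $\mathcal{A}$ with respect to $\langle\cdot,\cdot\rangle_1$, density of $D(\mathcal{A})$ in $\mathcal{H}_0$, and a range condition (surjectivity of $I-\mathcal{A}$).

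For dissipativity, given $(u,v)\in D(\mathcal{A})$, I compute
\[
\langle \mathcal{A}(u,v),(u,v)\rangle_1 = \int_0^1 \eta v'u'\,dx + \int_0^1 \frac{A_\lambda u\cdot v}{\sigma}\,dx - \lambda\int_0^1 \frac{uv}{\sigma d}\,dx + \beta\, v(1)u(1).
\]
Applying the integration-by-parts formula \eqref{conal} to the second term, the two $\int\eta u'v'$ contributions cancel, the two singular integrals $\lambda\int uv/(\sigma d)$ cancel as well, and only the boundary contribution $\eta(1)u'(1)v(1)+\beta u(1)v(1)=v(1)\bigl[\eta(1)u'(1)+\beta u(1)\bigr]$ survives. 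The defining boundary condition $\eta u'(1)+v(1)+\beta u(1)=0$ rewrites this bracket as $-v(1)$, so that $\langle\mathcal{A}(u,v),(u,v)\rangle_1 = -v(1)^2 \leq 0$, establishing both non-positivity and dissipativity.

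Density is immediate since $C_c^\infty(0,1)\times C_c^\infty(0,1)\subset D(\mathcal{A})$: smoothness and compact support in $(0,1)$ guarantee $A_\lambda u\in L^2_{1/\sigma}(0,1)$ and trivially verify the boundary condition at $x=1$; this subspace is dense in $\mathcal{H}_0$. For the range condition I fix $(f,g)\in\mathcal{H}_0$ and seek $(u,v)\in D(\mathcal{A})$ with $u-v=f$ and $v-A_\lambda u=g$. Substituting $v=u-f$ reduces the problem to finding $u\in H^1_{1/\sigma,0}(0,1)$ solving
\[
u - A_\lambda u = f+g,\qquad \eta u'(1)+(1+\beta)u(1)=f(1),
\]
in the weak sense, which is exactly the variational problem
\[
B(u,\phi):=\int_0^1\!\!\frac{u\phi}{\sigma}\,dx+\int_0^1\!\!\eta u'\phi'\,dx-\lambda\int_0^1\!\!\frac{u\phi}{\sigma d}\,dx+(1+\beta)u(1)\phi(1)=\int_0^1\!\!\frac{(f+g)\phi}{\sigma}\,dx+f(1)\phi(1)
\]
for every $\phi\in H^1_{1/\sigma,0}(0,1)$.

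The main technical step, and the place where Hypothesis \ref{hyp2} and the Hardy--Poincaré inequality of Proposition \ref{propL2} enter, is the coercivity of $B$. Exactly as in the proof of Proposition \ref{Prop2.2} (splitting the cases $\lambda\le 0$ and $\lambda\in(0,1/C_{HP})$ via \eqref{startondino}), the singular term is absorbed by $\int_0^1\eta(u')^2\,dx$, yielding $B(u,u)\gtrsim \|u\|_{1,1/\sigma}^2$; continuity of $B$ and of the right-hand side functional are straightforward, so Lax--Milgram provides a unique $u\in H^1_{1/\sigma,0}(0,1)$. Testing against $\phi\in C_c^\infty(0,1)$ shows in the distributional sense that $A_\lambda u \in L^2_{1/\sigma}(0,1)$; the assumption $K_a+2K_d\le 2$ (inherited as $K_1+2K_2\le 2$ in the earlier framework) guarantees $u/d\in L^2_{1/\sigma}(0,1)$ and therefore $Au\in L^2_{1/\sigma}(0,1)$, so $u\in D(A_\lambda)$. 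Finally, reinserting general $\phi\in H^1_{1/\sigma,0}(0,1)$ into $B(u,\phi)$ and using \eqref{conal} recovers the Robin-type boundary relation, which translates back into $\eta u'(1)+v(1)+\beta u(1)=0$ with $v:=u-f\in H^1_{1/\sigma,0}(0,1)$. Thus $(u,v)\in D(\mathcal{A})$ solves $(I-\mathcal{A})(u,v)=(f,g)$, the range condition holds, and Lumer--Phillips yields the contraction semigroup $(S(t))_{t\ge 0}$. The anticipated obstacle is precisely the coercivity/regularity step, where the singular term $\lambda u/d$ must be controlled simultaneously by the principal part and kept inside $L^2_{1/\sigma}$; the smallness hypothesis on $\lambda$ and the dimensional condition $K_a+2K_d\le 2$ are exactly what make both controls compatible.
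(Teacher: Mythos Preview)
Your overall strategy---Lumer--Phillips via dissipativity and surjectivity of $I-\mathcal{A}$, with the latter reduced to a Lax--Milgram problem for the bilinear form you call $B$---is exactly the paper's approach (the paper cites \cite[Corollary~3.20]{nagel}, which is the reflexive-space version of Lumer--Phillips). The dissipativity computation, the variational formulation, the coercivity argument splitting on the sign of $\lambda$, the use of $K_a+2K_d\le 2$ to pull $u/d$ into $L^2_{1/\sigma}$, and the recovery of the boundary relation are all carried out the same way.

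There is, however, a genuine error in your density step. You claim $C_c^\infty(0,1)\times C_c^\infty(0,1)$ is dense in $\mathcal{H}_0=H^1_{1/\sigma,0}(0,1)\times L^2_{1/\sigma}(0,1)$, but this is false for the first factor: by definition $H^1_{1/\sigma,0}(0,1)$ imposes only $u(0)=0$, not $u(1)=0$, whereas every $u\in C_c^\infty(0,1)$ vanishes at $x=1$. Since the trace $u\mapsto u(1)$ is continuous on $H^1_{1/\sigma,0}(0,1)$ (see \eqref{*z}), the closure of $C_c^\infty(0,1)$ in that norm is the proper closed subspace $\{u:u(0)=u(1)=0\}$. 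The fix is simple and is what the paper does implicitly: in a reflexive (in particular Hilbert) space, a dissipative operator for which $I-\mathcal{A}$ is surjective is automatically densely defined, so density need not be checked separately once you have the other two ingredients. Alternatively, enlarge your test space in the first component to smooth functions vanishing only at $0$ and verify the boundary constraint directly.
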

\begin{proof} The proof is based on \cite[Corollary 3.20]{nagel}.
According to this result, it is sufficient to prove that $ \mathcal A:D(\mathcal A)\to \mathcal H_0$ is dissipative and $I-\mathcal A$ is surjective.

\underline{$ \mathcal A$ is dissipative:} take $(u,v) \in D(\mathcal A)$. Then $(u,v) \in H^2_{{\frac{1}{\sigma},0}}(0,1) \times H^1_{{\frac{1}{\sigma},0}}(0,1)$ and so Lemma \ref{Lemma2.1} holds. Hence, 
\[
\begin{aligned}
\langle \mathcal A (u,v), (u,v) \rangle_{\mathcal H_0} &=\langle (v, A_\lambda u), (u,v) \rangle _{\mathcal H_0} \\
&=\int_0^1 \eta u'v'dx+ \int_0^1 vA_\lambda u\frac{1}{\sigma}dx   - \lambda \int_0^1 \frac{u v}{\sigma d} dx + \beta u(1)v(1)
\\&
=\int_0^1\eta u'v'dx -\int_0^1\eta u'v'dx + [\eta u'v](1)  + \lambda \int_0^1 \frac{u v}{\sigma d} dx- \lambda \int_0^1 \frac{u v}{\sigma d} dx\\
&+ \beta u(1)v(1)= -v^2(1) \le 0
\end{aligned}
\]
\underline{$I - \mathcal A$ is surjective:} 
take  $(f,g) \in \mathcal H_0=H^1_{{\frac{1}{\sigma},0}}(0,1)\times L^2_{{\frac{1}{\sigma}}}(0,1)$. We have to prove that there exists $(u,v) \in D(\mathcal A)$ such that
\begin{equation}\label{4.3'}
 ( I-\mathcal A)\begin{pmatrix} u\\
v\end{pmatrix} = \begin{pmatrix}f\\
g \end{pmatrix} \Longleftrightarrow  \begin{cases} v= u -f,\\
-A_\lambda u + u= f+ g.\end{cases}
\end{equation}
Thus, define $F: H^1_{{\frac{1}{\sigma},0}}(0,1) \rightarrow \R$ as
\[
F(z)=\int_0^1(f+g) z\frac{1}{\sigma}  dx + z(1)f(1).
\]
Obviously, $F\in H^{-1}_{{\frac{1}{\sigma},0}}(0,1)$, the dual space of $H^1_{{\frac{1}{\sigma},0}}(0,1)$ with respect to the pivot space $L^2_{{\frac{1}{\sigma}}}(0,1)$. Now, introduce the bilinear form $L:H^1_{{\frac{1}{\sigma},0}}(0,1)\times H^1_{{\frac{1}{\sigma},0}}(0,1)\to \R$ given by
\[
L(u,z):=  \int_0^1 u z \frac{1}{\sigma} dx + \int_0^1\eta u'z'dx -\lambda \int_0^1  \frac{u z}{\sigma d} dx+ (\beta+1) u(1)z(1)
\]
for all $u, z \in H^1_{{\frac{1}{\sigma},0}}(0,1)$. Clearly, $L(u,z)$ is coercive: indeed, for all $u \in  H^1_{{\frac{1}{\sigma},0}}(0,1)$, we have 
\[
\begin{aligned}
L(u,u) &= \int_0^1  \frac{u^2}{\sigma} dx + \int_0^1\eta (u')^2dx -\lambda \int_0^1  \frac{u^2}{\sigma d} dx + (\beta+1)u^2(1)\\
&\ge  \int_0^1  \frac{u^2}{\sigma} dx + \int_0^1\eta (u')^2dx = \|u\|^2_{1, \frac{1}{\sigma}},
\end{aligned}
\]
if $\lambda \le 0$. On the other hand,   if $\lambda \in (0, C_{HP})$ then
\[
\begin{aligned}
L(u,u) \ge   \int_0^1  \frac{u^2}{\sigma} dx + (1-\lambda C_{HP})\int_0^1\eta (u')^2dx  \ge (1-\lambda C_{HP})\|u\|^2_{1, \frac{1}{\sigma}},
\end{aligned}
\]
by \eqref{stima1}.
 Moreover $L(u,z)$ is 
 continuous: indeed, using again \eqref{stima1}, one has
\[
\begin{aligned}
|L(u,z)| &\le  \|u\|_{ L^2_{\frac{1}{\sigma}} (0,1) }\|z\|_ {L^2_{\frac{1}{\sigma}} (0,1) } \\
&+(\|\eta\|_{L^\infty(0,1)}+ |\lambda|\|\eta\|_{L^\infty(0,1)}C_{HP}+ (\beta+1))\|u'\|_{L^2(0,1)}\|z'\|_ {L^2(0,1)},
\end{aligned}\]
 for all $u, z \in H^1_{{\frac{1}{\sigma},0}}(0,1)$, being
 \[
 |u(1)| \le \int_0^1 |u'(x)|dx \le \|u'\|_{L^2(0,1)}
 \]
 (analogously for $z(1)$),
 and the conclusion follows.

As a consequence, by the Lax-Milgram Theorem, there exists a unique solution $u \in H^1_{{\frac{1}{\sigma},0}}(0,1)$ of
\[
L(u,z)= F(z) \quad  \mbox{ for all }z\in H^1_{{\frac{1}{\sigma},0}}(0,1),\]
namely
\begin{equation}\label{4.4}
\int_0^1  \frac{u z}{\sigma} dx + \int_0^1 \eta u'z'dx -\lambda \int_0^1  \frac{u z}{\sigma d} dx +(\beta+1) u(1)z(1)
= \int_0^1(f+g) z \frac{1}{\sigma} dx  + z(1)f(1)
\end{equation}
for all $z \in H^1_{{\frac{1}{\sigma},0}}(0,1)$.

Now, take $v:= u-f$; then $v \in H^1_{{\frac{1}{\sigma},0}}(0,1)$.
We will prove that $(u,v) \in D(\mathcal A)$ and solves \eqref{4.3'}. To begin with, \eqref{4.4} holds for every $z \in C_c^\infty(0,1).$ Thus we have
\[
\int_0^1 \eta u'z'dx = \int_0^1\left(f+g-u+ \lambda \frac{u}{d}\right) z \frac{1}{\sigma} dx 
\]
 for every $z \in C_c^\infty(0,1).$ Hence $\ds-(\eta u')'= \left(f+g-u+ \lambda \frac{u}{d}\right) \frac{1}{\sigma}$ a.e. in $(0,1)$, i.e. $-\sigma(\eta u')'=\left(f+g-u+ \lambda\ds \frac{u}{d}\right) $ a.e. in $(0,1)$. In particular, $A_\lambda u= Au+ \lambda\ds \frac{u}{d}=-\left(f+g-u\right) \in L^2_{ \frac{1}{\sigma}}(0,1)$. Moreover, using the assumption $K_a+ 2K_d \le 2$, one has that $\ds \frac{u}{d}  \in L^2_{ \frac{1}{\sigma}}(0,1)$; thus $u \in D(A_\lambda)$. Coming back to \eqref{4.4}  and thanks to Lemma \ref{Lemma2.1}, one has 
 \[
 - \int_0^1 \sigma(\eta u')'z \frac{1}{\sigma} dx + (\eta u' z)(1)  +(\beta+1) u(1)z(1)
= \int_0^1\left(f+g-u+ \lambda\ds \frac{u}{d}\right) z \frac{1}{\sigma} dx  + z(1)f(1)
\] and
 \[
 (\eta u' z)(1)  +(\beta+1) u(1)z(1)=z(1)f(1)
 \]
for all $z \in H^1_{{\frac{1}{\sigma},0}}(0,1)$, since  $-\sigma(\eta u')'=\left(f+g-u+ \lambda\ds \frac{u}{d}\right) $ a.e. in $(0,1)$, . Hence 
$
\eta (1) u' (1) +(\beta+1) u(1) - f(1)=0 .
$
Recalling that $v = u - f$, one has
\[
\eta (1) u' (1) +\beta u(1) + v(1)=0.
\]
In conclusion,  $(u,v) \in D(\mathcal A)$
 and \eqref{4.3'} holds.
\end{proof}

As usual in semigroup theory, the mild solution of \eqref{CP} obtained above can be more regular: if $\cY_0 \in D(\mathcal A)$, then the solution is classical, in the sense that $\cY \in  C^1([0, +\infty); \mathcal H_0) \cap C([0, +\infty);D(\mathcal A))
$ and the equation in \eqref{mainequation} holds for all $t \ge0$. Hence, as in \cite[Corollary 4.2]{alabau} or in \cite[Proposition 3.15]{daprato}, one can deduce Theorem \ref{thmmildclassolution}.
\section{Aknowledgments}
G. Fragnelli is partially supported by INdAM GNAMPA Project 2023 {\it ``Modelli differenziali per l'evoluzione del clima e i suoi impatti"} (CUP E53C22001930001), by INdAM GNAMPA Project 2024 {\it ``Analysis, control and inverse problems for evolution
		equations arising in climate science"} (CUP E53C23\-001670001) and by the PRIN 2022 PNRR project {\it Some Mathematical approaches to climate change and its impacts} (CUP E53D23017910001). She is also a member of {\it UMI ``Modellistica Socio-Epidemiologica (MSE)''}.
		\noindent D. Mugnai  is partially supported by INdAM GNAMPA Project 2023 ”Variational and non-variational problems with lack of compactness ”
(CUP E53C22001930001), by INdAM GNAMPA Project 2024 ”Nonlinear
problems in local and nonlocal settings with applications" (CUP E53C23001670001)
and by the PRIN 2022 ”Advanced theoretical aspects in PDEs and their applications” (CUP J53D23003700006).
		\noindent G. Fragnelli and D. Mugnai are members of {\it UMI "CliMath"} and  of  {\it Gruppo Nazionale per l'Analisi Ma\-te\-matica, la Probabilit\`a e le loro Applicazioni (GNAMPA)} of the Istituto Nazionale di Alta Matematica (INdAM). 
	
	\noindent This paper was partially written during the visit of A. Sbai at Università of Tuscia, under a grant of the 
MAECI (Ministry of Foreign Affairs and International Cooperation, Italy).


\end{document}